\newtheorem{Thm}{Theorem}
\newtheorem{Lem}[Thm]{Lemma}
\newtheorem{Cor}[Thm]{Corollary}
\newtheorem{Conj}[Thm]{Conjecture}
\newtheorem{Prob}[Thm]{Problem}
\theoremstyle{definition}
\theoremstyle{remark}
\newtheoremstyle{component}{}{}{}{}{\itshape}{.}{.5em}{\thmnote{#3}#1}
\theoremstyle{component}
\newtheorem*{component}{}
\begin{document}

\title[Heegaard splittings and singularities]{Heegaard splittings and singularities of the product map of Morse functions}
\author{Kazuto Takao}
\address{Department of Mathematics, Graduate School of Science, Hiroshima University, Higashi-Hiroshima 739-8526 Japan}
\email{kazutotakao@gmail.com}
\subjclass[2010]{Primary 57N10, 57M50, 57R45}
\keywords{Heegaard splittings, stabilization, Morse function, singular set.}

\begin{abstract}
We give an upper bound for the Reidemeister--Singer distance between two Heegaard splittings in terms of the genera and the number of cusp points of the product map of Morse functions for the splittings.
It suggests that a certain development in singularity theory may lead to the best possible bound for the Reidemeister--Singer distance.
\end{abstract}

\maketitle

\section{Introduction}

A {\it Heegaard splitting} is a decomposition of a closed orientable connected $3$-manifold into two handlebodies.
Any such $3$-manifold admits a Heegaard splitting but it is not unique.
An important problem in $3$-manifold topology is the classification of the Heegaard splittings for a $3$-manifold.
As part of the classification problem, it is of particular interest how we can estimate the {\it Reidemeister--Singer distance} between two Heegaard splittings for a $3$-manifold.
(See Section \ref{Heegaard} for definitions and known results.)

Heegaard splittings are closely related to Morse functions on the manifold.
We can choose a Morse function which represents a given Heegaard splitting.
(See Section \ref{Morse} for the details.)
Suppose $F,G:M\rightarrow {\mathbb R}$ are Morse functions representing Heegaard splittings $(\Sigma ,V^-,V^+),(T,W^-,W^+)$, respectively, for a given fixed $3$-manifold $M$.
We assume that each of them has unique critical points of indices zero and three, and that the product map $F\times G$ is {\it stable}.
In this paper we show the following:

\begin{Thm}\label{main-1}
The Reidemeister--Singer distance between $(\Sigma ,V^-,V^+)$, $(T,W^-,W^+)$ is at most ${\rm g}(\Sigma )+{\rm g}(T)+c(F\times G)/2$.
\end{Thm}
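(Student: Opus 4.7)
The plan is to interpolate between $F$ and $G$ via the one-parameter family
\[
F_\theta := (\cos\theta)\, F + (\sin\theta)\, G, \qquad \theta \in [0, \pi/2],
\]
so that $F_0 = F$ represents $(\Sigma, V^-, V^+)$ and $F_{\pi/2} = G$ represents $(T, W^-, W^+)$, and then to bound the number of stabilizations needed to pass between them by controlling how $F_\theta$ varies.

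First I would translate the singularities of $F\times G$ into critical points of $F_\theta$. A point $p\in M$ is critical for $F_\theta$ exactly when $p$ lies in the singular set $S$ of $\Phi := F\times G$ and the fold direction of $\Phi$ at $p$ is perpendicular to $(\cos\theta,\sin\theta)$. This defines a well-behaved map $\phi\colon S\to\mathbb{RP}^1$ whose critical points are precisely the cusps of $\Phi$. As $\theta$ sweeps half of $\mathbb{RP}^1$ from $0$ to $\pi/2$, the critical points of $F_\theta$ trace curves on $S$, and at each cusp of $\Phi$ a canceling pair of critical points is born or annihilated; a half-circle of directions thus generically produces $c(F\times G)/2$ such birth/death events.

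Next I would assign a Heegaard splitting $\mathcal{H}_\theta$ to each generic $F_\theta$ by rearranging its critical values and cancelling extraneous extrema against adjacent index-$1$ or index-$2$ critical points, arranging that $\mathcal{H}_0 = (\Sigma, V^-, V^+)$ and $\mathcal{H}_{\pi/2} = (T, W^-, W^+)$. Between cusp events $\mathcal{H}_\theta$ changes only by handle slides and isotopies and so its Reidemeister--Singer class is constant; at a cusp whose canceling pair has index $(1,2)$ the class changes by a single stabilization or destabilization. For cusps whose canceling pair has index $(0,1)$ or $(2,3)$ I would absorb the extraneous extremum by cancellation against an already existing index-$1$ or index-$2$ critical point, at the cost of at most one stabilization to be charged against the boundary terms.

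Finally I would sum contributions: $c(F\times G)/2$ stabilizations from the cusp events, plus initial and terminal corrections bounded by $g(\Sigma)$ and $g(T)$ respectively, needed to reconcile the rearranged forms of $F$ and $G$ at the endpoints with the prescribed splittings. The main obstacle will be the treatment of the index-$(0,1)$ and index-$(2,3)$ cusps: such events are not themselves Heegaard stabilizations, and bounding their cumulative cost by $g(\Sigma)+g(T)$ requires a charging scheme that pairs each such event with a distinct handle of $\Sigma$ or $T$ without double-counting. Making this charging precise, and verifying that the cusps encountered along the half-rotation can be consistently ordered and processed, will be the crux of the argument.
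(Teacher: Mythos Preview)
Your central claim is incorrect: birth/death events in the rotation family $F_\theta = (\cos\theta)F + (\sin\theta)G$ do \emph{not} occur at cusps of $\Phi = F \times G$. The map $\phi\colon S \to \mathbb{RP}^1$ recording the tangent direction of the discriminant is regular at cusp points and critical at inflection points. In the local model $\varphi(S) = \{(3x^2, 2x^3)\}$ of a cusp, the tangent direction is $[6x:6x^2] = [1:x] \in \mathbb{RP}^1$, a regular function of the parameter $x$; so for each $\theta$ there is a unique critical point of $F_\theta$ near the cusp, moving smoothly with $\theta$, and no pair is created or annihilated there. By contrast, at an inflection point the curvature of $\varphi(S)$ vanishes, $\phi$ is critical, and a canceling pair of critical points of $F_\theta$ is born or dies. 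This is exactly Lemma~\ref{degeneracy} applied after a rotation of the target; its Cusp Case shows explicitly that a cusp point of $\varphi$ is always a \emph{non-degenerate} critical point of the projected function.

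Consequently, your rotation argument, carried out correctly, bounds the Reidemeister--Singer distance by the number of indefinite negative-slope inflection points of the graphic---which is precisely Johnson's Theorem~\ref{johnson}, not Theorem~\ref{main-1}. Since the number of inflection points is not controlled by $c(F\times G)$ (a graphic can carry arbitrarily many inflections and no cusps), the proposed charging scheme cannot produce the stated bound, and the later discussion of index-$(0,1)$ and index-$(2,3)$ ``cusp events'' rests on a non-existent phenomenon. The paper's proof uses a genuinely different deformation: a shear of $\mathbb{R}^2$ along a moving horizontal band, fixing the $g$-coordinate and hence $G$. Under this shear the births and deaths of vertical points are localized not at the inflection points of the graphic but at its horizontal points, leftward-convex vertical points, and certain cusps; these contributions are then bounded respectively by ${\rm g}(T)$, ${\rm g}(\Sigma)$, and---after averaging two shear directions---half the number of cusps.
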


\noindent
Here, ${\rm g}(\cdot )$ denotes the genus of a surface, and $c(\cdot )$ denotes the number of {\it cusp points} of a stable map.
(See Section \ref{Stable} for notions on singularities.)

This work is based on the idea of the so-called {\it Rubinstein--Scharlemann graphic}.
Rubinstein--Scharlemann \cite{rubinstein-scharlemann1} constructed it via Cerf theory \cite{cerf} from {\it sweep-outs} (see \cite{rubinstein-scharlemann1} for the definition) which come from two Heegaard splittings.
Kobayashi--Saeki \cite{kobayashi-saeki} redefined a sweep-out as a sort of function on the manifold, and interpreted the graphic as the discriminant set of the product map of the sweep-outs representing the Heegaard splittings.
Johnson \cite{johnson1} constructed the graphic from Morse functions instead of sweep-outs, and showed the following:

\begin{Thm}[Johnson]\label{johnson}
The Reidemeister--Singer distance between $(\Sigma ,V^-,V^+)$, $(T,W^-,W^+)$ is at most $i_-+c_2$.
Here, $i_-$ is the number of indefinite negative slope inflection points and $c_2$ is the number of negative slope type two cusps of the graphic of $F$ and $G$.
\end{Thm}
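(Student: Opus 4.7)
My plan is to follow the Rubinstein--Scharlemann framework, using the graphic of $F$ and $G$ as a roadmap for transforming $(\Sigma,V^-,V^+)$ into $(T,W^-,W^+)$ by a controlled sequence of stabilizations. The first step is to set up the dictionary between the graphic and Heegaard structures: for a generic $(s,t)$ in the complement of the graphic, the surfaces $F^{-1}(s)$ and $G^{-1}(t)$ are transverse, and their intersection pattern partitions each into subsurfaces that can be recombined into a Heegaard surface for $M$. In the extreme regions (near the index~$0$ and index~$3$ critical values of $F$ and $G$) this recombined surface is isotopic to $\Sigma$ or $T$, respectively, so a path in the parameter plane joining these regions gives a one-parameter family of candidate Heegaard surfaces.

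Second, I would choose a generic path $\gamma$ in $\mathbb{R}^2$ from a point representing $(\Sigma,V^-,V^+)$ to one representing $(T,W^-,W^+)$, arranged so that $\gamma$ meets the graphic transversely away from its non-generic points (cusps, crossings, tangencies to the coordinate directions). Each transverse crossing of $\gamma$ with a regular arc of the graphic corresponds, in the Cerf-theoretic sense, to a single critical event of $G|_{F^{-1}(s)}$ (or $F|_{G^{-1}(t)}$) and therefore to an elementary modification of the induced Heegaard surface. The slope of the arc at the crossing records whether the saddle is added on the $V^-$ or $V^+$ side of the current splitting, and the definiteness/indefiniteness of the relevant critical point controls whether the resulting modification is an ambient isotopy, a handle slide, or a genuine change of genus.

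Third, I would analyze the remaining contributions. At an inflection point of the graphic the slope vanishes, reflecting a moment at which a critical arc of $G|_{F^{-1}(s)}$ reverses direction in $G$; when such an inflection is indefinite and of negative slope type, the local Cerf diagram forces precisely one stabilization of the current Heegaard splitting in order to continue the family. The other inflection types can be absorbed into isotopies and handle slides and contribute nothing. A cusp of the graphic corresponds to a birth or death of a critical point of $G|_{F^{-1}(s)}$; here the negative slope type~$2$ cusps are the ones in which the born (or dying) critical pair is non-cancellable on both sides of the interpolating surface, again requiring exactly one stabilization. Summing over the crossings of $\gamma$ with the distinguished arcs and singular points of the graphic yields the bound $i_- + c_2$.

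The main obstacle is the local analysis at each type of graphic singularity: one has to produce a concrete local model for the pair $(F,G)$ near an indefinite inflection or type~$2$ cusp, describe explicitly the induced change on the recombined Heegaard surface, and verify that the slope/type dichotomy in the statement is exactly the one that distinguishes isotopy-and-handle-slide moves from genuine stabilizations. Once this local dictionary is in place, the global estimate follows by concatenation along $\gamma$, with the flexibility in the choice of $\gamma$ used to ensure that no contributions are double-counted.
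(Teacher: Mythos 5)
Your proposal is not the argument that yields this bound, and it has a genuine gap at its core. You propose to fix a generic path $\gamma$ in the parameter plane and to read off elementary modifications from the transverse crossings of $\gamma$ with the graphic; this is the sweep-out/level-surface comparison in the style of Rubinstein--Scharlemann, and it cannot produce the count $i_-+c_2$. Inflection points and cusps form a finite subset of the graphic, so a generic path never meets them at all, and crossings of $\gamma$ with regular arcs of the graphic do not correspond to stabilizations in any controlled way; in particular nothing in your scheme explains why the final estimate should be a count of \emph{all} indefinite negative-slope inflection points and negative-slope type-two cusps of the graphic, which are global features independent of any path. (The level-surface recombination you sketch in your first paragraph is also far from automatic, and arguments of that kind lead to genus-linear bounds of a quite different shape, not to $i_-+c_2$.)

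The missing idea is to interpolate between the two \emph{functions}, not between two points of the plane: one composes $\varphi=F\times G$ with a one-parameter family of linear projections ${\mathbb R}^2\rightarrow{\mathbb R}$ turning (or, as in Section \ref{Proof} of this paper, shearing) from the $g$-direction to the $f$-direction, so that the family of composed functions starts at $G$ and ends at a function quasi-isotopic to $F$, while the kernel direction of the projection keeps negative slope throughout. By Lemma \ref{critical}, the critical points of each composed function are exactly the points of the singular set where the graphic is tangent to that kernel direction; by Lemma \ref{degeneracy}, the composed function fails to be Morse precisely when such a tangency occurs at an inflection point, so all events of the induced family happen at negative-slope inflection points (or at negative-slope type-two cusps, since type-one cusps merely exchange tangency points as in Figure \ref{fig_swing} and create no birth/death). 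At an indefinite such point the event is a birth/death of a canceling pair of critical points of indices $1$ and $2$ (Lemmas \ref{index} and \ref{canceling}), hence a stabilization or destabilization by Lemma \ref{stabilization}; at definite points the pair has indices $0,1$ or $2,3$ and the splitting is unchanged. Counting events gives $i_-+c_2$. Note also that the paper itself does not reprove Theorem \ref{johnson} (it is quoted from Johnson); its Sections \ref{Reading}--\ref{Proof} supply exactly the dictionary above, which is what your proposal lacks.
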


\noindent
We remark that $c_2$ turns out to be zero and so the Reidemeister--Singer distance is at most $i_-$.
(See Section \ref{Reading} for an analysis of the graphic.)

Note that $i_-$ is not an invariant of the Heegaard splittings but depends on the choice of $F$ and $G$.
To get a bound for the Reidemeister--Singer distance in terms of invariants of the Heegaard splittings, the following problem is proposed.

\begin{Prob}
Find an upper bound for the minimum of $i_-$ over all choices of $F$ and $G$ in terms of invariants of the Heegaard splittings.
\end{Prob}

This seems a complicated problem as Johnson mentioned in \cite{johnson1}.
While $c(F\times G)$ unfortunately also depends on the choice of $F$ and $G$, Theorem \ref{main-1} gives a reinterpretation of the problem as follows:

\begin{Prob}
Find an upper bound for the minimum of $c(F\times G)$ over all choices of $F$ and $G$ in terms of invariants of the Heegaard splittings.
\end{Prob}

This might be easier for singularity theoretic approaches.
Significantly, it is known by Levine \cite{levine1} that cusp points of a stable map from an orientable $3$-manifold to an orientable surface can be eliminated by a homotopy of the map.
Theorem \ref{main-1} and Levine's theorem suggest the following:

\begin{Conj}
The Reidemeister--Singer distance between two Heegaard splittings is at most the sum of the genera.
\end{Conj}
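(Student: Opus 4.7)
The plan is to combine Theorem \ref{main-1} with a product-preserving version of Levine's cusp-elimination theorem. Theorem \ref{main-1} already reduces the conjectured bound ${\rm g}(\Sigma) + {\rm g}(T)$ to the following assertion: for any two Heegaard splittings, one can choose Morse functions $F$ and $G$ representing them so that $F \times G$ is stable and has no cusp points. Starting from any pair $F_0, G_0$ representing $(\Sigma, V^-, V^+)$ and $(T, W^-, W^+)$ with $F_0 \times G_0$ stable, Theorem \ref{main-1} yields the bound ${\rm g}(\Sigma) + {\rm g}(T) + c(F_0 \times G_0)/2$, and the task is to drive the cusp term down to zero while modifying $F_0$ and $G_0$ only through one-parameter families of Morse functions that continue to represent the same Heegaard splittings.

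The principal ingredient would be to import Levine's argument into this constrained category. Levine's theorem provides a homotopy of stable maps $\Phi_t$ with $\Phi_0 = F_0 \times G_0$ and $c(\Phi_1) = 0$, and one would try to realize each local cusp-cancellation in his construction by a simultaneous local modification of $F$ and a local modification of $G$, keeping each intermediate function Morse and adapted to its Heegaard splitting at all times. A first pass might try to locate a pair of cusps in the graphic, push them into a product neighborhood $I \times J \subset \mathbb{R}^2$, and realize their cancellation by coordinated Morse birth/death modifications of $F|_{F^{-1}(I)}$ and $G|_{G^{-1}(J)}$, exploiting the fact that Heegaard splittings are unchanged under cancellations of Morse pairs whose ascending/descending spheres do not interact with the splitting surface.

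The main obstacle is precisely this final step. Levine's cancellations are supported in discs of $\mathbb{R}^2$ that need not be aligned with the product coordinates, so a generic cancellation does not factor as independent changes of $F$ and $G$. Overcoming this requires either showing that cusps of a product map can always be maneuvered into rectangular product neighborhoods before cancellation, or developing a parametric Morse-theoretic surgery on the pair $(F, G)$ that eliminates cusps of $F \times G$ in pairs while preserving the splittings. This is exactly the singularity-theoretic development alluded to in the abstract, and I expect essentially all of the remaining work to live there.
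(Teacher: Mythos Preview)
The statement you are attempting is stated in the paper as a \emph{Conjecture}, not a theorem; the paper contains no proof of it. Immediately after formulating the conjecture, the paper writes: ``The conjecture however cannot be proved immediately because Levine's homotopy does not preserve the product structure of $F\times G$.'' Your proposal recovers exactly this analysis: you correctly reduce the conjecture, via Theorem~\ref{main-1}, to finding representatives $F,G$ with $c(F\times G)=0$, you correctly invoke Levine's cusp-elimination as the natural tool, and you correctly identify the obstruction---that Levine's cancellations are not in general realizable as independent deformations of the two factors. This is precisely the obstacle the paper names and leaves open.

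So your proposal is not a proof, and you acknowledge as much in your final paragraph. What you have written is an accurate reconstruction of the paper's own heuristic motivation for the conjecture, together with a somewhat more detailed articulation of what a product-preserving cusp elimination would have to accomplish (maneuvering cusp pairs into rectangular neighborhoods, or a parametric surgery on the pair $(F,G)$). That is a reasonable outline of the missing work, but it remains missing: neither the paper nor your proposal supplies the ``product-preserving version of Levine's cusp-elimination theorem'' that would be required, and there is no indication that such a version exists or is even true. The gap is genuine and is the entire content of the conjecture.
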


The Hass--Thompson--Thurston example \cite{hass-thompson-thurston} proves that the sum of the genera is the best possible bound.
The conjecture however cannot be proved immediately because Levine's homotopy does not preserve the product structure of $F\times G$.
 
\subsection*{Acknowledgement}

The author is grateful to \mbox{Ken'ichi Ohshika} for all his help as a supervisor.
He would like to thank \mbox{Tsuyoshi Kobayashi} and \mbox{Osamu Saeki} for valuable discussions and encouragement.
He would also like to thank the referee for many corrections and suggestions.

\section{Heegaard surfaces and splittings}\label{Heegaard}

It is well known that any closed orientable connected $3$-manifold $M$ can be decomposed into two handlebodies $V^-,V^+$ by an embedded surface $\Sigma $.
In this situation, we call the surface $\Sigma $ a {\it Heegaard surface} for $M$ and the ordered triple $(\Sigma ,V^-,V^+)$ a {\it Heegaard splitting} for $M$.
Two Heegaard surfaces $\Sigma ,T$ for a $3$-manifold are said to be {\it isotopic} if there is an ambient isotopy of $M$ taking $\Sigma $ to $T$.
In this paper, two Heegaard splittings $(\Sigma ,V^-,V^+),(T,W^-,W^+)$ are said to be {\it isotopic} if an isotopy from $\Sigma $ to $T$ takes $V^-$ to $W^-$ and simultaneously $V^+$ to $W^+$.
Note that the flipped Heegaard splitting $(\Sigma ,V^+,V^-)$ is not always isotopic to $(\Sigma ,V^-,V^+)$, though their Heegaard surfaces are the same.
We frequently mean its isotopy class by a Heegaard surface/splitting.

A {\it stabilization} for $(\Sigma ,V^-,V^+)$ is an operation to obtain another Heegaard splitting $(\Sigma ',V^{\prime -},V^{\prime +})$ for $M$ as follows:
Suppose $\gamma $ is a properly embedded arc in $V^+$ parallel to $\Sigma $ and $N(\gamma )$ denotes its closed regular neighborhood.
Let $V^{\prime -}$ be the union $V^-\cup N(\gamma )$, $V^{\prime +}$ be the closure of $V^+\setminus N(\gamma )$ and $\Sigma '$ be their common boundary.
Both $V^{\prime -},V^{\prime +}$ are then handlebodies and so $(\Sigma ',V^{\prime -},V^{\prime +})$ is a Heegaard splitting for $M$.
Since a stabilization increases the genus of both handlebodies by one, it defines a {\it stabilization} for the Heegaard surface $\Sigma $.
Note that, for a given Heegaard surface/splitting, a stabilization always gives a unique Heegaard surface/splitting up to isotopy.
On the other hand the inverse operation, which is called a {\it destabilization}, is not always possible and not always unique.

It is known by Reidemeister \cite{reidemeister} and Singer \cite{singer} that any two isotopy classes of Heegaard surfaces for the same $3$-manifold are related by a finite sequence of stabilizations and destabilizations.
The same is also true for Heegaard splittings.
The {\it Reidemeister--Singer distance} between two Heegaard surfaces/splittings is defined by the minimal number of stabilizations and destabilizations relating them.
Note that the distance between $\Sigma ,T$ is the minimum of the distance between $(\Sigma ,V^-,V^+),(T,W^-,W^+)$ and the distance between $(\Sigma ,V^-,V^+),(T,W^+,W^-)$.
For both Heegaard surfaces and splittings, the question of how we can estimate the distance is called {\it the Stabilization Problem}.

Rubinstein--Scharlemann \cite{rubinstein-scharlemann1} showed that the distance between two Heegaard surfaces $\Sigma ,T$ with ${\rm g}(\Sigma )\leq {\rm g}(T)$ for a non-Haken manifold is at most $9{\rm g}(\Sigma )+15{\rm g}(T)-18$.
In fact the same bound also stands for Heegaard splittings.
Recently, Johnson \cite{johnson3} showed that the distance between two Heegaard splittings $(\Sigma ,V^-,V^+),(T,W^-,W^+)$ with ${\rm g}(\Sigma )\leq {\rm g}(T)$ for a $3$-manifold is at most $3{\rm g}(\Sigma )+2{\rm g}(T)-2$.
On the other hand, Hass--Thompson--Thurston \cite{hass-thompson-thurston} gave a Heegaard splitting $(\Sigma ,V^-,V^+)$ whose Reidemeister--Singer distance to $(\Sigma ,V^+,V^-)$ is $2{\rm g}(\Sigma )$.
For every integer $k\geq 2$, Johnson \cite{johnson2} gave a $3$-manifold with Heegaard surfaces of genera $2k-1$ and $2k$ such that the distance between them is $2k-1$.
Bachman \cite{bachman} also gave these kinds of examples of Heegaard surfaces and splittings.
For every integer $k\geq 2$, the author \cite{takao} modified Johnson's arguments to give a $3$-manifold with two Heegaard surfaces of genus $2k$ such that the distance between them is $2k$.

As well as Theorem \ref{main-1} for Heegaard splittings, we also show the following for Heegaard surfaces under the same assumption.

\begin{Thm}\label{main-2}
The Reidemeister--Singer distance between $\Sigma ,T$ is at most ${\rm g}(\Sigma )+{\rm g}(T)+c(F\times G)/4$.
\end{Thm}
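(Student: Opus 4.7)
The plan is to reduce Theorem \ref{main-2} to Theorem \ref{main-1} by exploiting the observation (noted in Section \ref{Heegaard}) that the surface distance between $\Sigma$ and $T$ equals $\min(d_1, d_2)$, where $d_1$ is the splitting distance between $(\Sigma, V^-, V^+)$ and $(T, W^-, W^+)$ and $d_2$ is the splitting distance between $(\Sigma, V^-, V^+)$ and the flipped splitting $(T, W^+, W^-)$. Since $\min(d_1, d_2) \leq (d_1 + d_2)/2$, it suffices to bound $d_1 + d_2 \leq 2\,{\rm g}(\Sigma) + 2\,{\rm g}(T) + c(F \times G)/2$.

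First I would bound $d_2$ by applying Theorem \ref{main-1} to a Morse function representing the flipped splitting. The natural choice is $G' = K - G$ for sufficiently large $K$: its critical points coincide with those of $G$, but each index $k$ becomes $3-k$, so $G'$ represents $(T, W^+, W^-)$. The product map $F \times G'$ differs from $F \times G$ by the target reflection $(x, y) \mapsto (x, K - y)$, which preserves the singular set type-by-type; in particular $c(F \times G') = c(F \times G)$. A direct application of Theorem \ref{main-1} thus yields $d_2 \leq {\rm g}(\Sigma) + {\rm g}(T) + c(F \times G)/2$, but summed with the bound on $d_1$ this only recovers the factor $c/2$.

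To gain the additional factor of two, I would refine the count by classifying the cusps of $F \times G$ into two types according to the sign of the slope at the cusp in the graphic (consistent with the negative/positive slope terminology Johnson uses for inflection points). Writing $c(F \times G) = c^-(F \times G) + c^+(F \times G)$, the proof of Theorem \ref{main-1} should be revisited to verify that only the \emph{negative-slope} cusps contribute to its bound, giving the sharpened inequality $d_1 \leq {\rm g}(\Sigma) + {\rm g}(T) + c^-(F \times G)/2$. The reflection $G \mapsto G'$ interchanges negative and positive slope cusps, so by the same sharpened statement $d_2 \leq {\rm g}(\Sigma) + {\rm g}(T) + c^+(F \times G)/2$. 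Summing and halving gives the desired $\min(d_1, d_2) \leq {\rm g}(\Sigma) + {\rm g}(T) + c(F \times G)/4$.

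The main obstacle will be the refinement step: one must go into the proof of Theorem \ref{main-1} and confirm that the reasoning genuinely only consumes cusps of one slope type, with the other type becoming relevant only in the flipped orientation. This amounts to tracking how cusps translate into stabilization moves (in the spirit of Johnson's Theorem \ref{johnson} and its reading of the graphic) and checking that reversing the sign of $G$ reverses which stabilization direction each cusp records. Once this bookkeeping is established, the averaging argument above completes the proof without further work.
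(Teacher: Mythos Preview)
Your plan is correct and matches the paper's proof essentially step for step: the paper establishes the sharpened inequality $d_+\le {\rm g}(\Sigma)+{\rm g}(T)+\sharp\{\text{negative slope cusps}\}/2$ inside the proof of Theorem~\ref{main-1} (its inequality~(3)), then obtains the companion bound $d_-\le {\rm g}(\Sigma)+{\rm g}(T)+\sharp\{\text{positive slope cusps}\}/2$ by reflecting $G\mapsto -G$, and averages. The only point you leave implicit is that the paper's inequality~(3) is itself obtained by averaging two shearing isotopies (scanning line running upward versus downward), so the ``refinement step'' you flag as the main obstacle is already carried out there and requires no new idea.
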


\section{Morse functions}\label{Morse}

In this section, we briefly describe the connection between Heegaard splittings and Morse functions.
We define a {\it Morse function} as a smooth function whose critical points are all non-degenerate.
(See \cite{milnor} for basic notions in Morse theory.)
Note that we do not assume that the critical points have pairwise distinct values.
The condition can be satisfied after an arbitrarily small {\it quasi-isotopy}.
A {\it quasi-isotopy} is a homotopy consisting of Morse functions.
It is an equivalence relation of Morse functions and each equivalence class is called a {\it quasi-isotopy class}.

Suppose $M$ is a closed orientable connected smooth $3$-manifold and $F:M\rightarrow {\mathbb R}$ is a Morse function.
Let $\tilde{F}$ be a Morse function obtained from $F$ by a small quasi-isotopy such that the critical points of $\tilde{F}$ have pairwise distinct values.
Such a Morse function corresponds to a handle decomposition as follows:
Let $c_1,c_2,\ldots ,c_n$ be the critical values and $r_0,r_1,\ldots ,r_n$ be regular values of $\tilde{F}$ such that $r_0<c_1<r_1<c_2<\cdots <c_n<r_n$.
Each $\tilde{F}^{-1}((-\infty ,r_j])$ is isotopic to the result of attaching an $i_j$-handle to $\tilde{F}^{-1}((-\infty ,r_{j-1}])$, where $i_j$ is the index of the critical point at $c_j$.
In this way, $\tilde{F}$ determines a handle decomposition for $M$.
Conversely, we can construct a Morse function corresponding to a given handle decomposition.

A quasi-isotopy keeps all the critical points non-degenerate but does not always keep the order of the critical values in ${\mathbb R}$.
A quasi-isotopy changing the order of the critical values corresponds to a rearrangement of handles of the handle decomposition.
It is well known that the handles can be rearranged so that the lower the index is, the earlier a handle is attached.
Then the union $V^-$ of the $0$-handles and the $1$-handles is a handlebody, and dually the union $V^+$ of the $2$-handles and the $3$-handles is also a handlebody.
Denoting the common boundary of $V^-,V^+$ by $\Sigma $, the triple $(\Sigma ,V^-,V^+)$ is a Heegaard splitting for $M$.
It is known that the isotopy class of the Heegaard splitting $(\Sigma ,V^-,V^+)$ is independent of the order in which the handles are attached (see \cite{schultens}).
We thus have the following:

\begin{Lem}\label{uniqueness}
A quasi-isotopy class of Morse functions determines a unique isotopy class of Heegaard splittings.
\end{Lem}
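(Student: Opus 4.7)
My plan is to show that the Heegaard-splitting isotopy class obtained by the construction is locally constant in $t$ along any quasi-isotopy $\{F_t\}_{t\in[0,1]}$; since $[0,1]$ is connected, the classes coming from $F_0$ and $F_1$ must then agree. The cited theorem of Schultens \cite{schultens} will handle ``well-definedness up to reordering of handles''; the remaining work is to bridge small changes in $t$.

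First, by standard transversality in the space of smooth one-parameter families of Morse functions, I would approximate the given quasi-isotopy by a generic one. Because births and deaths of critical points are forbidden throughout a quasi-isotopy, the number and indices of the critical points stay constant in $t$, while the critical values vary smoothly and are pairwise distinct except at finitely many times $0<t_1<\cdots <t_k<1$, at each of which exactly two critical values cross transversely.

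On each open subinterval $(t_{i-1},t_i)$, the critical points vary smoothly and the ordering of their critical values is fixed, so the attaching spheres in successive level sets depend smoothly on $t$. The corresponding handle decompositions, and hence the Heegaard splittings obtained by rearranging handles into non-decreasing index order, are therefore ambient isotopic for any two parameters in the interval. Across a crossing time $t_i$, I would pick neighboring regular parameters $t_i^-<t_i<t_i^+$ and observe that the two handle decompositions use the same critical points and the same attaching data, differing only by a transposition of two consecutive handles in the attachment order; the Schultens reference then guarantees that rearranging into non-decreasing index order yields the same isotopy class of Heegaard splitting. Concatenating these identifications across all intervals and crossings closes the argument.

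The main obstacle I anticipate is the analysis at the crossing times $t_i$: one must be sure that a transverse critical-value coincidence really changes only the enumeration of handles and not any geometric attaching data. Non-degeneracy of the two crossing critical points, together with the transversality of the crossing, keeps their descending manifolds, and those of all other critical points, coherent through $t_i$, so the underlying handle data agree on the two sides of $t_i$ and only the order of attachment is swapped. Everything else reduces to openness arguments in the $C^\infty$ topology combined with the Schultens invariance result.
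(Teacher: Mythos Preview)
Your proposal is correct and follows essentially the same approach as the paper. The paper's ``proof'' is really just the paragraph preceding the lemma: a quasi-isotopy amounts to rearranging handles, and by \cite{schultens} the resulting Heegaard splitting is independent of the attachment order, whence the lemma. Your Cerf-theoretic argument (generic one-parameter family, finitely many transverse critical-value crossings, local constancy on subintervals, Schultens at the crossings) is exactly a careful unpacking of that sentence, with the same key input.
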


We remark that the condition about critical values is not required for a Morse function to determine an isotopy class of Heegaard splittings.
In this sense, we say that $F$ {\it represents} $(\Sigma ,V^-,V^+)$.
Note that the Morse function $-F$ represents the flipped splitting $(\Sigma ,V^+,V^-)$.

Regarding a Heegaard splitting as a handle decomposition, a stabilization is adding a canceling pair of a $1$-handle and a $2$-handle.
On the other hand, we can see that adding a canceling pair of a $0$-handle and a $1$-handle does not change the isotopy class of the Heegaard splitting.
Interpreting them in terms of Morse functions, we have the following:

\begin{Lem}\label{stabilization}
A birth/death of a canceling pair of critical points of indices $1$ and $2$ for a Morse function causes a stabilization/destabilization for the represented Heegaard splitting.
On the other hand, one of indices $0$ and $1$ or indices $2$ and $3$ preserves the Heegaard splitting.
\end{Lem}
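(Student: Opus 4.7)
The plan is to reduce the statement to the handle-theoretic descriptions indicated just before the lemma. The first step is to establish the standard dictionary between Morse theory and handle theory: a birth (respectively death) of a canceling pair of critical points of indices $i$ and $i+1$ for a Morse function corresponds to the insertion (respectively removal) of a canceling pair of handles of indices $i$ and $i+1$ in the induced handle decomposition. This can be seen via Cerf's description of generic one-parameter families of functions, or directly by comparing the sublevel sets just before and after the birth and identifying the new topology with a handle-pair attachment.

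Given this dictionary, I would treat the three possible index pairs separately. For indices $1$ and $2$, the birth inserts a $1$-handle $H_1$ and a $2$-handle $H_2$ into the handle decomposition of $M$. After rearranging so that all $1$-handles precede all $2$-handles, $H_1$ is appended to the existing $V^-$ to form the new negative handlebody $V^{\prime -}$, and $H_2$ is appended dually to $V^+$ to form $V^{\prime +}$. The cancellation of $(H_1,H_2)$ means that the co-core arc of $H_1$ meets the core disk of $H_2$ transversely in a single point, so the co-core can be pushed across this disk onto the new Heegaard surface $\Sigma'$. Consequently $H_1$ is a regular neighborhood of a $\Sigma'$-parallel arc, and undoing the addition expresses it as the regular neighborhood of a $\Sigma$-parallel arc $\gamma \subset V^+$; this matches the definition of stabilization given in Section~\ref{Heegaard}. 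A death of such a pair is the inverse operation and therefore a destabilization.

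For indices $0$ and $1$, both the new $0$-handle and the new $1$-handle lie in $V^-$; since they form a canceling pair, the handlebody $V^-$ is unchanged up to ambient isotopy, and therefore so are $V^+$ and $\Sigma$. The $(2,3)$-case reduces to this one by replacing $F$ with $-F$, which represents the flipped Heegaard splitting and interchanges index-$i$ with index-$(3-i)$ critical points.

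The step I expect to be the most delicate is the $(1,2)$-case: matching the canceling pair of handles with the paper's geometric definition of stabilization via a $\Sigma$-parallel arc. The subtlety lies in verifying that the co-core of $H_1$, after being pushed across the core disk of $H_2$, really can be realized as a regular neighborhood of an arc lying in the original $V^+$ and parallel to the original $\Sigma$, rather than merely in $V^{\prime +}$ and parallel to $\Sigma'$. Once this identification is made, the $(0,1)$ and $(2,3)$ cases follow routinely from handle cancellation within a single handlebody.
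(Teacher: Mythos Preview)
Your proposal is correct and follows the same approach as the paper, which in fact does not give a formal proof at all: the lemma is stated immediately after the sentence ``Regarding a Heegaard splitting as a handle decomposition, a stabilization is adding a canceling pair of a $1$-handle and a $2$-handle. On the other hand, we can see that adding a canceling pair of a $0$-handle and a $1$-handle does not change the isotopy class of the Heegaard splitting,'' and is treated as an immediate consequence of that discussion together with the Morse/handle dictionary. Your write-up simply fills in the details the paper leaves to the reader; the concern you flag about identifying the canceling $(1,2)$-pair with the $\Sigma$-parallel arc definition is exactly the content of the first quoted sentence, which the paper asserts without further argument.
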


\section{Stable maps}\label{Stable}

To study the Stabilization Problem, we should address the relationship between two Heegaard splittings in a $3$-manifold.
Suppose $(\Sigma ,V^-,V^+),(T,W^-,W^+)$ are Heegaard splittings for a $3$-manifold $M$.
Instead of comparing two Heegaard splittings themselves, we may compare corresponding Morse functions in the sense of Lemma \ref{uniqueness}.
Let $F,G:M\rightarrow {\mathbb R}$ be Morse functions representing $(\Sigma ,V^-,V^+)$, $(T,W^-,W^+)$, respectively.
A method of comparing two Morse functions is analyzing their product.
The {\it product map} $F\times G$ is the map from $M$ to ${\mathbb R}^2$ defined by $(F\times G)(p)=(F(p),G(p))$.
Then, in this section, we briefly review basic notions and facts on singularities of smooth maps from $3$-manifolds to $2$-manifolds.

Suppose $\varphi $ is a smooth map from a closed orientable smooth $3$-manifold $M$ to an orientable smooth $2$-manifold $N$.
Recall that $p\in M$ is a {\it regular point} of $\varphi $ if the differential $(d\varphi )_p:T_pM\rightarrow T_{\varphi (p)}N$ is surjective, and otherwise is a {\it singular point}.
The set $S_\varphi $ of singular points of $\varphi $ is called the {\it singular set} and its image $\varphi (S_\varphi )$ is called the {\it discriminant set} of $\varphi $.
At a regular point $p\in M\setminus S_\varphi $, the map $\varphi $ has the standard form $\varphi (u,x,y)=(u,x)$ for some coordinate neighborhoods of $p$ and $\varphi (p)$.
Standard forms are also known for generic types of singular points as follows.

A {\it definite fold point} is a singular point $p$ such that $\varphi $ has the form $\varphi (u,x,y)=(u,x^2+y^2)$ for a coordinate neighborhood $U$ of $p=(0,0,0)$ and a coordinate neighborhood of $\varphi (p)=(0,0)$.
The Jacobian matrix of $\varphi (u,x,y)=(u,x^2+y^2)$ says that the singular set $S_\varphi \cap U$ is the arc $\{ (u,0,0)\} $.
It follows that each singular point on $S_\varphi \cap U$ is also a definite fold point by a translation of the local coordinates.
The arc $\{ (u,0,0)\} $ is embedded to the arc $\{ (u,0)\} \subset N$ by $\varphi $.

An {\it indefinite fold point} is a singular point $p$ where $\varphi $ has the local form $\varphi (u,x,y)=(u,x^2-y^2)$.
Similarly, the singular set $S_\varphi \cap U$ is the arc $\{ (u,0,0)\} $, consists of indefinite fold points and is embedded to the arc $\{ (u,0)\} \subset N$.
Collectively definite and indefinite fold points are referred to as {\it fold points}.

A {\it cusp point} is a singular point $p$ where $\varphi $ has the local form $\varphi (u,x,y)=(u,y^2+ux-x^3)$.
The singular set $S_\varphi \cap U$ is the arc $\{ (3x^2,x,0)\} $ centered at the point $p=(0,0,0)$.
One can check that the half $\{ (3x^2,x,0)\mid x<0\} $ consists of definite fold points and the other half $\{ (3x^2,x,0)\mid x>0\} $ consists of indefinite fold points.
Note that the arc $\{ (3x^2,x,0)\} $ is a regular curve but its image $\{ (3x^2,2x^3)\} \subset N$ has a {\it cusp} at $\varphi (p)=(0,0)$.
A {\it cusp} of a smooth curve on a surface is a point at which the two branches of the curve have a common tangent line and lie on the same side of the normal line.

Assume that every singular point of $\varphi $ is a fold point or a cusp point.
Then, by the above local observations and the compactness of $M$, we can see the outline of the singular set $S_\varphi $.
It is a $1$-dimensional submanifold of $M$, namely a collection of smooth circles.
There are finitely many cusp points and the restriction $\varphi |_{S_\varphi }$ is an immersion except that cusp points map to cusps.

\begin{Thm}[Mather \cite{mather5}]
A smooth map $\varphi :M\rightarrow N$ is stable if and only if:
\begin{enumerate}
\item The singular set $S_\varphi $ consists only of fold points and cusp points.
\item The restriction $\varphi |_{S_\varphi }$ has no double points on the cusps, and the immersion $\varphi |_{S_\varphi \setminus \text{(cusp points)}}$ has normal crossings.
\end{enumerate}
\end{Thm}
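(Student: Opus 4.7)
The plan is to follow Mather's general program for stability of smooth maps. The central reduction I would invoke is Mather's theorem that, for a proper smooth map $\varphi : M \to N$, stability under the Whitney $C^{\infty}$ topology is equivalent to \emph{infinitesimal stability}: every smooth vector field along $\varphi$ decomposes as $d\varphi(\eta)+\zeta\circ\varphi$ for some vector fields $\eta$ on $M$ and $\zeta$ on $N$. With that in hand the problem splits into a local (mono-germ) analysis, a multi-local (multi-germ) analysis, and a check that these purely local conditions suffice.

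First I would treat the mono-germ side. The Thom--Boardman stratification organises singular germs by successive corank data $\Sigma^{I}$, and Mather's \emph{nice dimensions} theorem singles out the pairs $(\dim M,\dim N)$ for which $\mathcal{A}$-stable germs coincide with a finite list of such types. A direct codimension count shows that $(3,2)$ lies in the nice range, and that the only stable local germs are $\Sigma^{0}$, $\Sigma^{1,0}$, and $\Sigma^{1,1,0}$; their $\mathcal{A}$-normal forms are precisely the regular, fold, and cusp forms written in the excerpt. This delivers condition~(1).

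Next I would bring in multi-germs. Since $M$ is compact, $\varphi$ has only finitely many critical points over any fibre, and infinitesimal stability must also hold at each collection of points with a common image. Applying Mather's multi-jet transversality theorem to the bad loci---multi-jets where two cusps, a cusp and a fold, or three or more fold branches share a target point, or where two fold images fail to cross transversally---gives exactly the genericity statements in condition~(2): the immersion $\varphi|_{S_\varphi\setminus\text{(cusp points)}}$ has only normal crossings, and no cusp lies at a double point of $\varphi|_{S_\varphi}$.

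The main obstacle is not formulating these conditions but the two-way verification that their conjunction is equivalent to infinitesimal stability. The ``if'' direction requires showing that any infinitesimal deformation of a map satisfying (1) and (2) admits an explicit lift, built piecewise from the local normal forms and glued using a partition of unity subordinate to a cover by coordinate neighbourhoods of the stated types. The ``only if'' direction combines the Thom--Boardman analysis in the nice dimensions with the multi-transversality argument above to rule out every germ or multi-germ not on the list. Both directions are classical but technically delicate, and this is where essentially all the work of Mather's paper is concentrated.
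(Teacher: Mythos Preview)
The paper does not prove this theorem at all: it is quoted as a result of Mather with a citation to \cite{mather5} and then used as a black box. There is therefore no ``paper's own proof'' to compare against.

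Your outline is a faithful sketch of the classical route through Mather's series on stability of $C^{\infty}$ mappings: the equivalence of stability and infinitesimal stability, the identification of stable mono-germs in the nice dimensions via Thom--Boardman symbols (yielding regular, fold, and cusp in the $(3,2)$ case), and the multi-jet transversality argument that pins down the multi-germ conditions. As a high-level plan this is correct and is essentially how the cited result is established. Just be aware that the full argument is spread over several of Mather's papers (the infinitesimal-stability equivalence is in part~II, the nice-dimensions classification in part~VI, with part~V supplying the transversality input cited here), so attributing the whole package to a single reference is a slight oversimplification; and that in this paper the statement is meant to be taken on faith rather than re-derived.
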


A {\it stable} map is, roughly speaking, a smooth map which does not change topologically by small deformations.
Strictly speaking, a smooth map $\varphi :M\rightarrow N$ between smooth manifolds $M,N$ is called {\it stable} if there exists an open neighborhood $U$ of $\varphi $ in $C^\infty (M,N)$ such that every map in $U$ can be obtained by an {\it isotopy} of $\varphi $.
Here, we denote by $C^\infty (M,N)$ the space of smooth maps from $M$ to $N$ endowed with the Whitney $C^\infty $ topology (see \cite{golubitsky-guillemin} or \cite{hirsch}).
An {\it isotopy} of $\varphi =\varphi _0$ is a homotopy $\{ \varphi _r\} _{r\in [0,1]}$ which is decomposed as $\varphi _r=H^N_r\circ \varphi _0\circ H^M_r$, where $\{ H^M_r\} _{r\in [0,1]},\{ H^N_r\} _{r\in [0,1]}$ are smooth ambient isotopies of $M,N$, respectively.
A stable map to $\mathbb{R}$ is a Morse function whose critical points have pairwise distinct values.

Now, we return to comparing two Heegaard splittings.
We constructed the product map $F\times G:M\rightarrow {\mathbb R}^2$ of two Morse functions $F,G$.
The following lemma is fundamental to our arguments.

\begin{Lem}\label{stable}
We can make the product map $F\times G$ stable by arbitrarily small quasi-isotopies of $F$ and $G$.
\end{Lem}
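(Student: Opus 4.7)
The plan is to reduce Lemma \ref{stable} to Mather's density theorem for stable maps, via the natural identification of the product of function spaces with the space of $\mathbb{R}^2$-valued maps. First, I would invoke the classical fact that the dimension pair $(3,2)$ lies within Mather's range of nice dimensions, so that the set of stable maps is open and dense in $C^\infty (M,\mathbb{R}^2)$ with the Whitney $C^\infty $ topology.

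Next, the coordinate-projection bijection
\[
C^\infty (M,\mathbb{R}^2)\longleftrightarrow C^\infty (M,\mathbb{R})\times C^\infty (M,\mathbb{R}),\qquad \varphi \longleftrightarrow (\pi _1\circ \varphi ,\pi _2\circ \varphi ),
\]
is a homeomorphism, so the pairs $(F',G')$ whose product $F'\times G'$ is stable form an open dense subset $\mathcal{S}$ of the product space. Since the set of Morse functions is itself open and dense in $C^\infty (M,\mathbb{R})$, the pairs of Morse functions form an open dense subset $\mathcal{M}$ of the product. The intersection $\mathcal{S}\cap \mathcal{M}$ is then open and dense, so arbitrarily close to the given $(F,G)$ I can find a pair $(F',G')$ of Morse functions with $F'\times G'$ stable.

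Finally, since Morseness is preserved under sufficiently small $C^2$ perturbations, a small enough choice of $(F',G')$ ensures that both straight-line homotopies $F_t=(1-t)F+tF'$ and $G_t=(1-t)G+tG'$ stay in the open set of Morse functions for all $t\in [0,1]$. These homotopies are then the required arbitrarily small quasi-isotopies, and their endpoints $F'$, $G'$ satisfy the stability conclusion.

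The main nontrivial ingredient to justify is the cited density of stable maps in the nice dimension pair $(3,2)$. A self-contained alternative would be a multijet-transversality argument: perturb $F\times G$ to avoid Thom--Boardman strata of codimension greater than that of cusps, and then perturb further so that the singular set has no double points on cusps and its image has only normal crossings elsewhere, thereby verifying the two conditions of the Mather criterion recalled above. This would recover the packaged density statement from the local normal forms listed earlier, but citing it directly is cleaner and avoids repeating classical calculations.
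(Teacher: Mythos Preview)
Your proof is correct and follows the same overall strategy as the paper: invoke Mather's density of stable maps in $C^\infty (M,\mathbb{R}^2)$ and pass through the homeomorphism with $C^\infty (M,\mathbb{R})\times C^\infty (M,\mathbb{R})$ to locate a nearby pair $(F',G')$ with stable product. The one place you diverge is in certifying that $F',G'$ are quasi-isotopic to $F,G$. You use openness of the Morse condition together with the linear structure of $C^\infty (M,\mathbb{R})$ to run straight-line homotopies inside the Morse locus; the paper instead first perturbs $F,G$ by small quasi-isotopies to \emph{stable} functions $\tilde F,\tilde G$ (Morse with distinct critical values) and then exploits their stability neighborhoods $U_{\tilde F},U_{\tilde G}$, in which every function is isotopic to $\tilde F,\tilde G$ and hence quasi-isotopic to $F,G$. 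Your route is a bit more direct and avoids the intermediate step through $\tilde F,\tilde G$; the paper's route trades that for not having to observe that a small convex ball around $F$ lies in the Morse locus (which is immediate here since $M$ is compact, so the Whitney topology is Fr\'echet and locally convex). Either mechanism closes the argument.
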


\begin{proof}
We show that in arbitrarily open neighborhoods in $C^\infty (M,{\mathbb R})$ of $F,G$, there exist Morse functions $F',G'$ quasi-isotopic to $F,G$, respectively, such that $F'\times G'$ is stable.

Let $U_F,U_G\subset C^\infty (M,{\mathbb R})$ be open neighborhoods of $F,G$, respectively.
We can make $F$ (resp. $G$) stable by an arbitrarily small quasi-isotopy, that is, there exists a stable function $\tilde{F}$ in $U_F$ (resp. $\tilde{G}$ in $U_G$) quasi-isotopic to $F$ (resp. $G$).
By the definition of a stable map, there exists an open neighborhood $U_{\tilde{F}}$ of $\tilde{F}$ (resp. $U_{\tilde{G}}$ of $\tilde{G}$) in $C^\infty (M,{\mathbb R})$ such that every function in $U_{\tilde{F}}$ (resp. $U_{\tilde{G}}$) is isotopic to $\tilde{F}$ (resp. $\tilde{G}$) and hence quasi-isotopic to $F$ (resp. $G$).

Note that the plane ${\mathbb R}^2$ has been coordinated so that $(F\times G)(p)=(F(p),G(p))$.
Let $\pi _f,\pi _g:{\mathbb R}^2\rightarrow {\mathbb R}$ be the projections $(f,g)\mapsto f$, $(f,g)\mapsto g$, respectively.
Since $\pi _f$ is smooth, the induced map $\pi _{f*}:C^\infty (M,{\mathbb R}^2)\rightarrow C^\infty (M,{\mathbb R})$, $\varphi \mapsto \pi _f\circ \varphi $ is continuous by \cite[Chapter I\hspace{-.1em}I, Proposition 3.5]{golubitsky-guillemin}.
Similarly, the map $\pi _{g*}$ induced by $\pi _g$ is continuous.

The subset $\pi _{f*}^{-1}(U_F\cap U_{\tilde{F}})\cap \pi _{g*}^{-1}(U_G\cap U_{\tilde{G}})$ of $C^\infty (M,{\mathbb R}^2)$ is open.
Since stable maps are dense in $C^\infty (M,{\mathbb R}^2)$ by Mather \cite{mather5}, there exists a stable map $\varphi $ in $\pi _{f*}^{-1}(U_F\cap U_{\tilde{F}})\cap \pi _{g*}^{-1}(U_G\cap U_{\tilde{G}})$.
The functions $F'=\pi _{f*}(\varphi )$, $G'=\pi _{g*}(\varphi )$ satisfy the claim.
\end{proof}

\section{Reading the graphic}\label{Reading}

We would like to read information about two smooth functions $F,G:M\rightarrow {\mathbb R}$ from the product map $\varphi :=F\times G$.
While we do not assume $F,G$ to be Morse in this section, we assume $\varphi $ to be stable throughout the following.
We call the discriminant set of $\varphi $ the {\it graphic} of $F$ and $G$.

There is a global coordinate system $(f,g)$ of ${\mathbb R}^2$ with respect to which $F\times G$ is defined as $(F\times G)(p)=(F(p),G(p))$.
The Jacobian matrix of $\varphi $ with respect to the coordinate system is composed of the gradients of $F$ and $G$.
If $p$ is a critical point of $F$ or $G$, the differential $(d\varphi )_p$ has rank at most one, namely $p$ is a singular point of $\varphi $.
So the singular set $S_\varphi $ includes all the critical points of $F$ and $G$.

Since we assume $\varphi $ to be stable, the map $\varphi |_{S_\varphi }$ is an immersion of circles with finitely many cusps.
We can therefore define the {\it slope} of the graphic $\varphi (S_\varphi )$ at $\varphi (p)$ for each $p\in S_\varphi $ with respect to the coordinate system $(f,g)$.
In particular, a point on the graphic with slope zero (resp. infinity) is called a {\it horizontal} (resp. {\it vertical}) {\it point}.
Such points correspond to critical points of $F$ and $G$ as follows:

\begin{Lem}[Johnson]\label{critical}
A point $p\in M$ is a critical point of $G$ if and only if $\varphi (p)$ is a horizontal point of the image of a small neighborhood of $p$ in $S_\varphi $.
The same holds for $F$ by replacing ``horizontal" with ``vertical".
\end{Lem}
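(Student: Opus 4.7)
The plan is to compute the tangent direction of the graphic at $\varphi(p)$ directly from $dF$ and $dG$, and then read off horizontality using the rank-one condition at a singular point of $\varphi$. Near $p$, parametrize $S_\varphi$ by a smooth embedding $\gamma:(-\epsilon,\epsilon)\to M$ with $\gamma(0)=p$, and set $v:=\gamma'(0)\in T_pM$. The image curve $\alpha(t):=\varphi(\gamma(t))$ has velocity $\alpha'(0)=\bigl((dF)_p(v),(dG)_p(v)\bigr)$ in the global coordinates $(f,g)$ on $\mathbb{R}^2$. Since $\varphi$ is stable, its singular point $p$ is a fold or cusp point and $(d\varphi)_p$ has rank exactly one, so $(dF)_p$ and $(dG)_p$ are linearly dependent covectors; I would split into the mutually exclusive cases (a) $(dG)_p=0$, (b) $(dF)_p=0$ with $(dG)_p\ne 0$, and (c) both covectors nonzero with $(dG)_p=c(dF)_p$, $c\ne 0$.

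Handle the fold case first. At a fold point, the tangent direction $v$ is transverse to the $2$-plane $\ker(d\varphi)_p=\ker(dF)_p\cap\ker(dG)_p$, so $\alpha'(0)\ne 0$ and the slope of the graphic at $\varphi(p)$ equals $(dG)_p(v)/(dF)_p(v)$, with the convention $\infty$ when the denominator vanishes. Transversality then forces $(dF)_p(v)\ne 0$ in cases (a) and (c), and $(dF)_p(v)=0$, $(dG)_p(v)\ne 0$ in case (b). The three slopes are therefore $0$, $c\ne 0$, and $\infty$, so among fold points the horizontal ones are precisely those in case (a), that is, the critical points of $G$. The analogous statement for $F$ and vertical points follows by swapping the roles of the two coordinates.

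The remaining task, which I expect to be the main obstacle, is the cusp case. Here $v\in\ker(d\varphi)_p$ gives $\alpha'(0)=0$, and the tangent direction of the graphic at $\varphi(p)$ must instead be read off as the common tangent of the two smooth branches meeting at the cusp, equivalently as the limit of the unit vectors $\alpha'(t)/|\alpha'(t)|$ as $t\to 0^{\pm}$. I would conclude by continuity: applying the fold analysis to the nearby fold points $\gamma(t)$ with $t\ne 0$ and passing to the limit shows that horizontality at the cusp forces the slope at those fold points to tend to $0$, which by continuity of $dF$ and $dG$ pins the limiting configuration into case (a) and yields $(dG)_p=0$ at the cusp itself. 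Making this rigorous requires extracting the leading-order behaviour of a $0/0$ limit, which I would do either by direct second-order computation of $\alpha''(0)=(\mathrm{Hess}_F(v,v),\mathrm{Hess}_G(v,v))$ together with the rank condition on Hessians, or by pulling back the cusp normal form $(u,x,y)\mapsto(u,y^2+ux-x^3)$ to the fixed coordinates $(f,g)$ and tracking how the target coordinate change interacts with the $f$- and $g$-axes.
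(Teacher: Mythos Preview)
Your argument is correct and takes a different, more coordinate-free route than the paper. The paper works entirely in the local normal forms: it composes the fold model $(u,x^2\pm y^2)$ or the cusp model $(u,y^2+ux-x^3)$ with the target coordinate change to $(f,g)$, computes $\nabla G$ by the chain rule, and finds in both cases that $\nabla G|_p=\bigl((\partial g/\partial s)_{\varphi(p)},0,0\bigr)$, so that $p$ is critical for $G$ iff the $s$-axis---which is the tangent line to the graphic---is horizontal. Your fold case bypasses the model entirely: since the slope of $\alpha$ at a fold point equals the proportionality constant $c$ in $(dG)_p=c\,(dF)_p$, horizontality is equivalent to $c=0$, i.e.\ to $(dG)_p=0$. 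This is cleaner and needs no local coordinates beyond the single fact that $T_pS_\varphi$ is transverse to $\ker(d\varphi)_p$ at a fold.

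Your continuity idea for the cusp also works, and without the $0/0$ difficulty you anticipate. You have already shown that the slope at each nearby fold point $\gamma(t)$ equals $c_t$, where $(dG)_{\gamma(t)}=c_t\,(dF)_{\gamma(t)}$; this $c_t$ is continuous in $t$ because at least one of the two covectors is nonzero at $t=0$, and the cusp tangent is by definition the limit of the fold tangents. Hence the cusp slope is $\lim_{t\to 0}c_t=c_0$, and horizontality is equivalent to $c_0=0$, i.e.\ to $(dG)_p=0$, with no higher-order expansion needed. By contrast, your proposed second-order route via $\alpha''(0)$ would not work as written: the formula $\alpha''(0)=\bigl(\mathrm{Hess}_F(v,v),\mathrm{Hess}_G(v,v)\bigr)$ omits the term $\bigl((dF)_p(\gamma''(0)),(dG)_p(\gamma''(0))\bigr)$, and in fact at a cusp both Hessian contributions $\mathrm{Hess}_F(v,v)$ and $\mathrm{Hess}_G(v,v)$ vanish identically (this drops out of the normal-form computation), so the tangent direction comes \emph{entirely} from the $\gamma''$ term you dropped. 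The normal-form alternative you mention would of course recover exactly the paper's computation.
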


\noindent
The proof has been given in \cite[Lemma 10]{johnson1}, but we include it as a warm-up exercise for the proofs of the lemmas below.

\begin{proof}
Since a regular point of $\varphi $ cannot be a critical point of $G$, we suppose $p$ is a singular point of $\varphi $.
There are a coordinate system $(u,x,y)$ of a small neighborhood $U$ of $p=(0,0,0)$ and a local coordinate system $(s,t)$ at $\varphi (p)=(0,0)$ such that either $(s,t)=(u,x^2\pm y^2)$ if $p$ is a fold point, or $(s,t)=(u,y^2+ux-x^3)$ if $p$ is a cusp point.
On the other hand, the global coordinate system $(f,g)$ of ${\mathbb R}^2$ satisfies $(f,g)=(F(u,x,y),G(u,x,y))$.
There exists a smooth regular coordinate transformation from $(s,t)$ to $(f,g)$.

\begin{component}[Fold Case]
Consider the case where $p$ is a fold point of $\varphi $.
In order to compute the gradient of $G$, we compute the partial derivatives of $G$.
For instance,
$$\frac{\partial G}{\partial u}=\frac{\partial s}{\partial u}\frac{\partial g}{\partial s}+\frac{\partial t}{\partial u}\frac{\partial g}{\partial t}=\frac{\partial }{\partial u}(u)\frac{\partial g}{\partial s}+\frac{\partial }{\partial u}(x^2\pm y^2)\frac{\partial g}{\partial t}=\frac{\partial g}{\partial s}.$$
Similarly we have:
\begin{align*}
\frac{\partial G}{\partial x}&=2x\frac{\partial g}{\partial t},
&\frac{\partial G}{\partial y}&=\pm 2y\frac{\partial g}{\partial t}.
\end{align*}
Substituting $(u,x,y)=(0,0,0)$, the gradient vector of $G$ at $p$ is $((\frac{\partial g}{\partial s})_{\varphi (p)},0,0)$.
The point $p$ is a critical point of $G$ if and only if the coordinate transformation satisfies $(\frac{\partial g}{\partial s})_{\varphi (p)}=0$.
It means that the $s$-axis is parallel to the $f$-axis at $\varphi (p)$.
Recall that the singular set $S_\varphi \cap U$ is embedded to the $s$-axis by $\varphi $.
The image $\varphi (S_\varphi \cap U)$ is therefore horizontal at $\varphi (p)$.
\end{component}

\begin{component}[Cusp Case]
In the case where $p$ is a cusp point of $\varphi $, we have the partial derivatives:
\begin{align*}
\frac{\partial G}{\partial u}&=\frac{\partial g}{\partial s}+x\frac{\partial g}{\partial t},
&\frac{\partial G}{\partial x}&=(u-3x^2)\frac{\partial g}{\partial t},
&\frac{\partial G}{\partial y}&=2y\frac{\partial g}{\partial t}.
\end{align*}
The gradient vector at $p$ is $((\frac{\partial g}{\partial s})_{\varphi (p)},0,0)$.
The point $p$ is a critical point if and only if $(\frac{\partial g}{\partial s})_{\varphi (p)}=0$.
It means that the $s$-axis is parallel to the $f$-axis at $\varphi (p)$.
Note that the $s$-axis is the tangent line of $\varphi (S_\varphi \cap U)=\{ (s,t)=(3x^2,2x^3)\} $ at the cusp $\varphi (p)$.
The image $\varphi (S_\varphi \cap U)$ is therefore horizontal at $\varphi (p)$.
\end{component}
\end{proof}

We can also define the second derivative of the graphic outside of vertical points and cusps.
In particular, a point with second derivative zero is called an {\it inflection point}.
In fact, zero or non-zero of the second derivative is preserved by rotating the coordinate system.
Inflection can therefore also be defined for vertical points, but we still assume an inflection point not to be a cusp.

\begin{Lem}\label{degeneracy}
A critical point $p$ of $G$ degenerates if and only if $\varphi (p)$ is a horizontal inflection point of the image of a small neighborhood of $p$ in $S_\varphi $.
The same holds for $F$ by replacing ``horizontal" with ``vertical".
\end{Lem}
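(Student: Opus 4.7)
The plan is to extend the proof of Lemma~\ref{critical} one derivative further: I would compare the Hessian of $G$ at $p$ with the second derivative of the graphic at $\varphi(p)$, retaining the local coordinates $(u,x,y)$ around $p$ and $(s,t)$ around $\varphi(p)$ used there, together with the regular transformation $(s,t)\mapsto(f,g)$. By Lemma~\ref{critical} the hypothesis that $p$ is a critical point of $G$ forces $g_s=0$ at $\varphi(p)$, and then regularity of $(s,t)\mapsto(f,g)$ forces $g_t\neq 0$; these facts will be used throughout.

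In the fold case, a second differentiation of the partial derivatives already recorded in Lemma~\ref{critical}, followed by substitution of $(u,x,y)=(0,0,0)$, should yield the diagonal Hessian
\[
\mathrm{Hess}_p G \;=\; \mathrm{diag}\bigl(g_{ss},\,2g_t,\,\pm 2g_t\bigr)
\]
(all partials evaluated at $\varphi(p)$), whose determinant $\pm 4\, g_{ss}\, g_t^{\,2}$ vanishes iff $g_{ss}=0$. On the other hand the graphic is locally the parametric curve $u\mapsto(f(u,0),g(u,0))$, so the standard parametric formula
\[
\frac{d^{\,2} g}{df^{\,2}} \;=\; \frac{g_{ss}\, f_s - g_s\, f_{ss}}{f_s^{\,3}}
\]
collapses to $g_{ss}/f_s^{\,2}$ at the horizontal point, where $g_s=0$ and $f_s\neq 0$ by regularity of the Jacobian. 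Hence inflection and degeneracy are both equivalent to $g_{ss}=0$.

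In the cusp case I would carry out the analogous but slightly longer bookkeeping with the normal form $t=y^2+ux-x^3$. The Hessian at $p$ should come out to
\[
\mathrm{Hess}_p G \;=\; \begin{pmatrix} g_{ss} & g_t & 0 \\ g_t & 0 & 0 \\ 0 & 0 & 2g_t \end{pmatrix}
\]
with determinant $-2\, g_t^{\,3}$; since $g_t\neq 0$ whenever $g_s=0$, a critical point over a cusp is automatically non-degenerate. This matches the convention fixed just before the lemma that a cusp of the graphic is not an inflection point, so both sides of the asserted equivalence are vacuously false there. The only mildly delicate point is the assertion $f_s\neq 0$ at a horizontal point, which is immediate from $g_s=0$ together with the nonsingular Jacobian of $(s,t)\mapsto(f,g)$; no deeper obstacle is anticipated, and the companion statement for $F$ is obtained by interchanging the roles of $f$ and $g$.
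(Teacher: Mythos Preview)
Your proposal is correct and follows essentially the same route as the paper: the same local coordinates and normal forms, the same diagonal (fold) and $3\times 3$ (cusp) Hessians with determinants $\pm 4\,g_{ss}\,g_t^{2}$ and $-2\,g_t^{3}$, and the same parametric computation of the second derivative of the graphic yielding $g_{ss}/f_s^{2}$ at the horizontal point. The observation that $g_s=0$ together with the nonvanishing Jacobian gives both $f_s\neq 0$ and $g_t\neq 0$ is exactly how the paper handles the only delicate point.
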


\noindent
This lemma in the case of fold points has been stated in \cite[Lemma 11]{johnson1} and we give a proof.
For the case of cusp points, we point out that no cusp point of $\varphi $ can be a degenerate critical point of $F,G$.

\begin{proof}
We continue with the notation in the proof of the previous lemma.
Suppose $p$ is a critical point of $G$, and hence $(\frac{\partial g}{\partial s})_{\varphi (p)}=0$.
The regularity of the coordinate transformation requires
$$\left| \begin{array}{cc}
\frac{\partial f}{\partial s}& \frac{\partial f}{\partial t}\\
\frac{\partial g}{\partial s}& \frac{\partial g}{\partial t}
\end{array}\right| 
=\frac{\partial f}{\partial s}\frac{\partial g}{\partial t}-\frac{\partial f}{\partial t}\frac{\partial g}{\partial s}\not=0.$$
So we have $(\frac{\partial f}{\partial s})_{\varphi (p)}(\frac{\partial g}{\partial t})_{\varphi (p)}\not=0$, and hence $(\frac{\partial f}{\partial s})_{\varphi (p)}\not=0$, $(\frac{\partial g}{\partial t})_{\varphi (p)}\not=0$.

\begin{component}[Fold Case]
Consider the case where $p$ is a fold point.
In order to compute the Hessian of $G$, we compute the second partial derivatives of $G$.
For instance,
\begin{align*}
\frac{\partial ^2G}{\partial x^2}&=\frac{\partial }{\partial x}\frac{\partial G}{\partial x}\\
&=\frac{\partial }{\partial x}\left( 2x\frac{\partial g}{\partial t}\right) \\
&=\frac{\partial }{\partial x}(2x)\frac{\partial g}{\partial t}+2x\frac{\partial }{\partial x}\left( \frac{\partial g}{\partial t}\right) \\
&=2\frac{\partial g}{\partial t}+2x\left( \frac{\partial s}{\partial x}\frac{\partial }{\partial s}+\frac{\partial t}{\partial x}\frac{\partial }{\partial t}\right) \left( \frac{\partial g}{\partial t}\right) \\
&=2\frac{\partial g}{\partial t}+2x\left( 2x\frac{\partial }{\partial t}\right) \left( \frac{\partial g}{\partial t}\right) \\
&=2\frac{\partial g}{\partial t}+4x^2\frac{\partial ^2g}{\partial t^2}.
\end{align*}
Similarly we have:
\begin{align*}
\frac{\partial ^2G}{\partial u^2}&=\frac{\partial ^2g}{\partial s^2},
&\frac{\partial ^2G}{\partial u\partial x}&=2x\frac{\partial ^2g}{\partial s\partial t},
&\frac{\partial ^2G}{\partial u\partial y}&=\pm 2y\frac{\partial ^2g}{\partial s\partial t},\\
\frac{\partial ^2G}{\partial x\partial y}&=\pm 4xy\frac{\partial ^2g}{\partial t^2},
&\frac{\partial ^2G}{\partial y^2}&=\pm 2\frac{\partial g}{\partial t}+4y^2\frac{\partial ^2g}{\partial t^2}.
\end{align*}
Substituting $(u,x,y)=(0,0,0)$, the Hessian matrix of $G$ at $p$ is
\begin{align*}
\left( \begin{array}{ccc}
\left( \frac{\partial ^2g}{\partial s^2}\right) _{\varphi (p)}& 0& 0\\
0& 2\left( \frac{\partial g}{\partial t}\right) _{\varphi (p)}& 0\\
0& 0& \pm 2\left( \frac{\partial g}{\partial t}\right) _{\varphi (p)}
\end{array}\right) 
\end{align*}
and its determinant is $\pm 4(\frac{\partial ^2g}{\partial s^2})_{\varphi (p)}(\frac{\partial g}{\partial t})_{\varphi (p)}^2$.
The critical point $p$ degenerates if and only if $(\frac{\partial ^2g}{\partial s^2})_{\varphi (p)}=0$.

On the other hand, we analyze the graphic $\varphi (S_\varphi \cap U)$ near the horizontal point $\varphi (p)$.
It is parametrized as $(s,t)=(u,0)$ and regarded as a graph of a function $g=\theta (f)$.
The derivatives of $\theta $ are
$$\frac{d\theta }{df}
=\frac{d}{df}g(u,0)
=\frac{\frac{d}{du}g(u,0)}{\frac{d}{du}f(u,0)}
=\frac{\frac{d}{du}(u)\frac{\partial g}{\partial s}(u,0)+\frac{d}{du}(0)\frac{\partial g}{\partial t}(u,0)}{\frac{d}{du}(u)\frac{\partial f}{\partial s}(u,0)+\frac{d}{du}(0)\frac{\partial f}{\partial t}(u,0)}
=\frac{\frac{\partial g}{\partial s}(u,0)}{\frac{\partial f}{\partial s}(u,0)},$$
\begin{align*}
\frac{d^2\theta }{df^2}&=\frac{d }{df}\frac{\frac{\partial g}{\partial s}(u,0)}{\frac{\partial f}{\partial s}(u,0)}\\
&=\left\{ \frac{d}{df}\left( \frac{\partial g}{\partial s}(u,0)\right) \frac{\partial f}{\partial s}(u,0)-\frac{\partial g}{\partial s}(u,0)\frac{d}{df}\left( \frac{\partial f}{\partial s}(u,0)\right) \right\} \bigg/ \left( \frac{\partial f}{\partial s}(u,0)\right) ^2\\
&=\left\{ \frac{\frac{\partial ^2g}{\partial s^2}(u,0)}{\frac{\partial f}{\partial s}(u,0)}\frac{\partial f}{\partial s}(u,0)-\frac{\partial g}{\partial s}(u,0)\frac{\frac{\partial ^2f}{\partial s^2}(u,0)}{\frac{\partial f}{\partial s}(u,0)}\right\} \bigg/ \left( \frac{\partial f}{\partial s}(u,0)\right) ^2\\
&=\left\{ \frac{\partial ^2g}{\partial s^2}(u,0)\frac{\partial f}{\partial s}(u,0)-\frac{\partial g}{\partial s}(u,0)\frac{\partial ^2f}{\partial s^2}(u,0)\right\} \bigg/ \left( \frac{\partial f}{\partial s}(u,0)\right) ^3.
\end{align*}
Noting that $(\frac{\partial g}{\partial s})_{\varphi (p)}=0$ and $(\frac{\partial f}{\partial s})_{\varphi (p)}\not=0$, the second derivative of $\theta $ at $u=0$ is $(\frac{\partial ^2g}{\partial s^2})_{\varphi (p)}/(\frac{\partial f}{\partial s})_{\varphi (p)}^2$.
The horizontal point $\varphi (p)$ is thus an inflection point if and only if $(\frac{\partial ^2g}{\partial s^2})_{\varphi (p)}=0$.
\end{component}

\begin{component}[Cusp Case]
If $p$ is a cusp point, we have the second partial derivatives:
\begin{align*}
\frac{\partial ^2G}{\partial u^2}&=\frac{\partial ^2g}{\partial s^2}+2x\frac{\partial ^2g}{\partial s\partial t}+x^2\frac{\partial ^2g}{\partial t^2},
&\frac{\partial ^2G}{\partial u\partial x}&=\frac{\partial g}{\partial t}+(u-3x^2)\frac{\partial ^2g}{\partial s\partial t}+x(u-3x^2)\frac{\partial ^2g}{\partial t^2},\\
\frac{\partial ^2G}{\partial u\partial y}&=2y\frac{\partial ^2g}{\partial s\partial t}+2xy\frac{\partial ^2g}{\partial t^2},
&\frac{\partial ^2G}{\partial x^2}&=-6x\frac{\partial g}{\partial t}+(u-3x^2)^2\frac{\partial ^2g}{\partial t^2},\\
\frac{\partial ^2G}{\partial x\partial y}&=2y(u-3x^2)\frac{\partial ^2g}{\partial t^2},
&\frac{\partial ^2G}{\partial y^2}&=2\frac{\partial g}{\partial t}+4y^2\frac{\partial ^2g}{\partial t^2}.
\end{align*}
The Hessian matrix of $G$ at $p$ is
\begin{align*}
\left( \begin{array}{ccc}
\left( \frac{\partial ^2g}{\partial s^2}\right) _{\varphi (p)}& \left( \frac{\partial g}{\partial t}\right) _{\varphi (p)}& 0\\
\left( \frac{\partial g}{\partial t}\right) _{\varphi (p)}& 0& 0\\
0& 0& 2\left( \frac{\partial g}{\partial t}\right) _{\varphi (p)}
\end{array}\right) 
\end{align*}
and its determinant is $-2(\frac{\partial g}{\partial t})_{\varphi (p)}^3\not=0$.
The critical point $p$ is thus non-degenerate.
\end{component}
\end{proof}

Apart from the question of degeneracy of the critical points, we would now like to consider the second derivatives at cusps.
We continue with the above notation in the cusp case.
We can assume that the cusp $\varphi (p)$ is not vertical, namely $(\frac{\partial f}{\partial s})_{\varphi (p)}\not=0$, after rotating if necessary.
The graphic $\varphi (S_\varphi \cap U)$ is parametrized as $(s,t)=(3x^2,2x^3)$.
The first derivative of $\varphi (S_\varphi \cap U)$ at $(3x^2,2x^3)$ for $x\not=0$ with respect to the coordinate system $(f,g)$ is
\begin{align*}
\frac{d}{df}g(3x^2,2x^3)&=\frac{\frac{d}{dx}g(3x^2,2x^3)}{\frac{d}{dx}f(3x^2,2x^3)}\\
&=\frac{\frac{d}{dx}(3x^2)\frac{\partial g}{\partial s}(3x^2,2x^3)+\frac{d}{dx}(2x^3)\frac{\partial g}{\partial t}(3x^2,2x^3)}{\frac{d}{dx}(3x^2)\frac{\partial f}{\partial s}(3x^2,2x^3)+\frac{d}{dx}(2x^3)\frac{\partial f}{\partial t}(3x^2,2x^3)}\\
&=\frac{\frac{\partial g}{\partial s}(3x^2,2x^3)+x\frac{\partial g}{\partial t}(3x^2,2x^3)}{\frac{\partial f}{\partial s}(3x^2,2x^3)+x\frac{\partial f}{\partial t}(3x^2,2x^3)}.
\end{align*}
The reader can check that the second derivative is
$$\frac{d^2}{df^2}g(3x^2,2x^3)=\frac{\frac{\partial f}{\partial s}(3x^2,2x^3)\frac{\partial g}{\partial t}(3x^2,2x^3)-\frac{\partial f}{\partial t}(3x^2,2x^3)\frac{\partial g}{\partial s}(3x^2,2x^3)+xA(x)}{6x\left\{ \frac{\partial f}{\partial s}(3x^2,2x^3)+x\frac{\partial f}{\partial t}(3x^2,2x^3)\right\} ^3},$$
where $A(x)$ is a certain smooth function.
As $x$ goes to zero, the denominator goes to zero while the numerator goes to the Jacobian of the coordinate transformation at $\varphi (p)$.
So the second derivative $\frac{d^2}{df^2}g(3x^2,2x^3)$ diverges.
Moreover, the sign of the second derivative is $\varepsilon $ when $x$ goes to zero from above, and is $-\varepsilon $ when $x$ goes to zero from below, where $\varepsilon $ is the sing of $\{ (\frac{\partial f}{\partial s})_{\varphi (p)}(\frac{\partial g}{\partial t})_{\varphi (p)}-(\frac{\partial f}{\partial t})_{\varphi (p)}(\frac{\partial g}{\partial s})_{\varphi (p)}\} /(\frac{\partial f}{\partial s})_{\varphi (p)}$.
It implies that $\varphi (p)$ is a {\it type one} cusp, that is, the tangent line at the cusp separates the two branches as on the left side of Figure \ref{fig_type}.
A {\it type two} cusp as on the right side of Figure \ref{fig_type} is also possible on a general plane curve, but on the graphic.

\begin{Lem}\label{type}
The graphic has no type two cusps.
\end{Lem}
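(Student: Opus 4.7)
The plan is to extract the lemma directly from the computation of $\frac{d^2}{df^2}g(3x^2,2x^3)$ carried out immediately before the statement. All the analytic work has been done; what remains is the geometric translation.

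First I would note that the property of being a type one or type two cusp is invariant under affine changes of coordinates of ${\mathbb R}^2$, and in particular under rotations. Hence the rotation that was used before the lemma to arrange $(\tfrac{\partial f}{\partial s})_{\varphi(p)}\neq 0$ is harmless, and we may assume $\varphi(p)$ is not vertical throughout. Near such a cusp, the image $\varphi(S_\varphi\cap U)=\{(3x^2,2x^3)\}$ decomposes into two branches, one for $x>0$ and one for $x<0$. Both lie on the same side of the normal line to the common tangent direction (the $s$-axis), so in $(f,g)$-coordinates each branch is a smooth graph $g=\theta_\pm(f)$ on a one-sided neighborhood of $f_0:=f(\varphi(p))$, with $\theta_\pm(f_0)=g(\varphi(p))$ and $\theta'_\pm(f_0)$ equal to the common tangent slope.

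Next I would invoke the computation already displayed: as $x\to 0^\pm$, the second derivative along the corresponding branch diverges with sign $\pm\varepsilon$, where $\varepsilon=\operatorname{sgn}\{(\tfrac{\partial f}{\partial s})_{\varphi(p)}(\tfrac{\partial g}{\partial t})_{\varphi(p)}-(\tfrac{\partial f}{\partial t})_{\varphi(p)}(\tfrac{\partial g}{\partial s})_{\varphi(p)}\}/(\tfrac{\partial f}{\partial s})_{\varphi(p)}$ is nonzero by regularity of the coordinate transformation. A Taylor remainder argument then gives, for $f$ close to $f_0$,
$$\theta_\pm(f)-\bigl(\text{tangent line at }\varphi(p)\bigr)(f)=\tfrac{1}{2}\theta''_\pm(\xi_\pm)(f-f_0)^2$$
for some $\xi_\pm$ between $f_0$ and $f$. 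Since $\theta''_+$ and $\theta''_-$ have opposite signs near $f_0$, the two sides of the above identity have opposite signs, and therefore the branch $x>0$ and the branch $x<0$ lie on opposite sides of the common tangent line. This is exactly the defining condition for a type one cusp, so $\varphi(p)$ cannot be of type two.

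I do not anticipate a serious obstacle: the heavy lifting is the explicit second-derivative computation that precedes the lemma, and the only point that needs care is the observation that the diverging but oppositely signed concavities of the two branches force them onto opposite sides of the tangent via the one-sided Taylor expansion above.
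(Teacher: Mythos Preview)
Your proposal is correct and follows essentially the same approach as the paper: the paper's proof is precisely the second-derivative computation displayed immediately before the lemma, together with the remark that the opposite signs of the diverging concavities ``impl[y] that $\varphi(p)$ is a type one cusp.'' You have simply made that final implication explicit via the one-sided Taylor remainder, which the paper leaves to the reader.
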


\begin{figure}[ht]
\begin{minipage}{50pt}
\begin{center}
\includegraphics[bb=0 0 50 50]{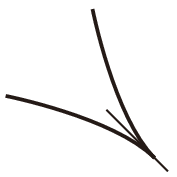}\\
type one
\end{center}
\end{minipage}
\hspace{25pt}
\begin{minipage}{50pt}
\begin{center}
\includegraphics[bb=0 0 50 50]{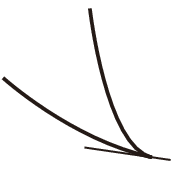}\\
type two
\end{center}
\end{minipage}
\caption{The two types of cusps on plane curves.}
\label{fig_type}
\end{figure}

We have read the existence and degeneracy of the critical points of $F,G$ from the graphic.
Lastly we would like to read the indices of non-degenerate critical points.
To do that, we mark and paint the graphic (see the pictures in Table \ref{tab_index}) in the following manner:
By the local observations in Section \ref{Stable}, each component of $S_\varphi \setminus \text{(cusp points)}$ consists of either definite fold points or indefinite fold points.
We mark each immersed arc of the graphic with the initial ``d" or ``i" according to whether it is from definite or indefinite fold points.
The image $\varphi (U)$ of a small neighborhood $U$ of a definite fold point is contained on one side of the definite arc $\varphi (S_\varphi \cap U)$.
We paint in gray the side of the collar of each definite arc in which the image is contained.

\begin{Lem}\label{index}
The index of a non-degenerate critical point of $G$ is determined by the type of the horizontal point as in Table \ref{tab_index}.
The symmetrical holds for $F$.
\end{Lem}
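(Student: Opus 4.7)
The plan is to determine the index of a non-degenerate critical point $p$ of $G$ by computing the signature of the Hessian $\mathrm{Hess}(G)_p$, reusing the explicit matrices obtained in the proof of Lemma \ref{degeneracy}, and then translating each sign appearing in the Hessian into a visible feature of the graphic at $\varphi (p)$.

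In the fold case, the Hessian is diagonal with entries $(\partial ^2g/\partial s^2)_{\varphi (p)}$, $2(\partial g/\partial t)_{\varphi (p)}$, and $\pm 2(\partial g/\partial t)_{\varphi (p)}$, with $+$ for definite folds and $-$ for indefinite folds. The index is the number of negative entries, so I must read off two independent signs. The sign of $(\partial ^2g/\partial s^2)_{\varphi (p)}$ records the concavity of the graphic regarded as a graph $g=\theta (f)$ near $\varphi (p)$, since the calculation in Lemma \ref{degeneracy} gives $d^2\theta /df^2=(\partial ^2g/\partial s^2)_{\varphi (p)}/(\partial f/\partial s)_{\varphi (p)}^2$; this sign is directly visible as upward or downward concavity of the horizontal arc. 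The sign of $(\partial g/\partial t)_{\varphi (p)}$ records on which side of the $s$-axis the image $\varphi (U)$ of a small neighborhood of $p$ lies: for a definite fold the image is confined to one side, and by definition this is the gray-painted side, so the shading encodes this sign; for an indefinite fold the image covers both sides, and the ``i'' label together with the concavity sign already determines the index. Enumerating the sign combinations yields the definite rows (indices $0,1,2,3$) and the indefinite rows (indices $1,2$) of Table \ref{tab_index}.

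In the cusp case, the Hessian obtained in Lemma \ref{degeneracy} has the block form
\[
\begin{pmatrix}
(\partial ^2g/\partial s^2)_{\varphi (p)} & (\partial g/\partial t)_{\varphi (p)} & 0 \\
(\partial g/\partial t)_{\varphi (p)} & 0 & 0 \\
0 & 0 & 2(\partial g/\partial t)_{\varphi (p)}
\end{pmatrix}.
\]
The upper $2\times 2$ block has determinant $-(\partial g/\partial t)_{\varphi (p)}^2<0$, so it always contributes exactly one positive and one negative eigenvalue. The remaining eigenvalue is $2(\partial g/\partial t)_{\varphi (p)}$, so the index is $1$ or $2$ according to the sign of $(\partial g/\partial t)_{\varphi (p)}$. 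As in the fold case, this sign has a direct geometric meaning: it indicates on which side of the common tangent line the two branches of the cusp lie, equivalently the side toward which the abutting definite arc is shaded gray. Assembling the two cusp cases with the six fold cases produces all entries of Table \ref{tab_index}.

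The main obstacle is not the matrix calculation, which is already in hand, but the bookkeeping of signs. I need to verify that the correspondences ``$(\partial g/\partial t)_{\varphi (p)}>0$ if and only if the gray side lies above the horizontal arc'' and ``$(\partial ^2g/\partial s^2)_{\varphi (p)}>0$ if and only if the graphic is concave up'' are robust against the ambiguity in choosing the local normal-form coordinates $(s,t)$—in particular under a sign reversal of either $s$ or $t$—so that the entries of Table \ref{tab_index} refer unambiguously to the drawn picture rather than to the auxiliary chart.
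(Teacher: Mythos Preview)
Your approach is essentially the same as the paper's: reuse the Hessian matrices from Lemma \ref{degeneracy}, read off the signature, and translate each sign into a visible feature of the graphic. The fold case matches exactly, including the identification of $(\partial ^2g/\partial s^2)_{\varphi(p)}$ with concavity and of $(\partial g/\partial t)_{\varphi(p)}$ with the gray-shaded side for definite folds.

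There is one slip in the cusp case. You write that the sign of $(\partial g/\partial t)_{\varphi(p)}$ ``indicates on which side of the common tangent line the two branches of the cusp lie.'' But by Lemma \ref{type} every cusp of the graphic is type one, meaning the tangent line \emph{separates} the two branches; there is no common side. The paper instead reads this sign as follows: in the normal form $(s,t)=(3x^2,2x^3)$ the indefinite half ($x>0$) lies above the definite half ($x<0$) with respect to $t$, so the sign of $(\partial g/\partial t)_{\varphi(p)}$ decides whether the indefinite arc lies above or below the definite arc with respect to $g$. That is the feature recorded in Table \ref{tab_index} for the cusp rows. Your alternative phrasing ``the side toward which the abutting definite arc is shaded gray'' may well be equivalent, but it is not obviously so and is not what the paper uses; if you keep it you would need to check the gray side of the definite branch near a cusp from the normal form $\varphi(u,x,y)=(u,y^2+ux-x^3)$.
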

\begin{table}[ht]
\begin{tabular}{|c|cccc|}
\hline
index $3$&\parbox[c]{35pt}{\includegraphics[bb=0 0 35 35,origin=c,angle=180]{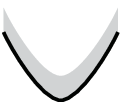}\hspace*{-6pt}\raisebox{18pt}{d}}&&&\\
\hline
index $2$&\parbox[c]{35pt}{\includegraphics[bb=0 0 35 35]{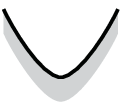}\hspace*{-12pt}\raisebox{19pt}{d}}
&\parbox[c]{35pt}{\includegraphics[bb=0 0 35 35,origin=c,angle=180]{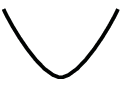}\hspace*{-5pt}\raisebox{18pt}{i}}
&\parbox[c]{35pt}{\includegraphics[bb=0 0 35 35,origin=c,angle=180]{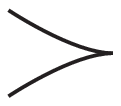}\hspace*{-26pt}\raisebox{23pt}{d}\hspace{-4pt}\raisebox{6pt}{i}}
&\parbox[c]{35pt}{\includegraphics[bb=0 0 35 35]{figures/cusp_1.eps}\hspace*{-16pt}\raisebox{23pt}{d}\hspace{-4pt}\raisebox{6pt}{i}}\\
\hline
index $1$&\parbox[c]{35pt}{\includegraphics[bb=0 0 35 35,origin=c,angle=180]{figures/concave.eps}\hspace*{-12pt}\raisebox{9pt}{d}}
&\parbox[c]{35pt}{\includegraphics[bb=0 0 35 35]{figures/horizontal.eps}\hspace*{-5pt}\raisebox{10pt}{i}}
&\parbox[c]{35pt}{\includegraphics[bb=0 0 35 35,origin=c,angle=180]{figures/cusp_1.eps}\hspace*{-26pt}\raisebox{6pt}{d}\hspace{-4pt}\raisebox{23pt}{i}}
&\parbox[c]{35pt}{\includegraphics[bb=0 0 35 35]{figures/cusp_1.eps}\hspace*{-16pt}\raisebox{6pt}{d}\hspace{-4pt}\raisebox{23pt}{i}}\\
\hline
index $0$&\parbox[c]{35pt}{\includegraphics[bb=0 0 35 35]{figures/convex.eps}\hspace*{-6pt}\raisebox{10pt}{d}}&&&\\
\hline
\end{tabular}
\vspace{8pt}
\caption{The correspondence between the index of a critical point of $G$ and the type of the horizontal point.
Here, we draw the graphic so that $f$ increases from left to right and $g$ increases from bottom to top.}
\label{tab_index}
\end{table}

\begin{proof}
We continue with the notation in the proofs of the above lemmas.
Recall that the index of a non-degenerate critical point of a function is the sum of the multiplicities of negative eigenvalues of the Hessian matrix.
See again the Hessian matrices in the fold case and the cusp case in the proof of the previous lemma.

\begin{component}[Fold Case]
The eigenvalues are $(\frac{\partial ^2g}{\partial s^2})_{\varphi (p)},2(\frac{\partial g}{\partial t})_{\varphi (p)},\pm 2(\frac{\partial g}{\partial t})_{\varphi (p)}$ as they appear in the Hessian matrix.
Recall that the double sign corresponds to the definite case and the indefinite case.
The first eigenvalue $(\frac{\partial ^2g}{\partial s^2})_{\varphi (p)}$ has the same sign as the second derivative $(\frac{\partial ^2g}{\partial s^2})_{\varphi (p)}/(\frac{\partial f}{\partial s})_{\varphi (p)}^2$ of the graphic $\varphi (S_\varphi \cap U)$ at the horizontal point $\varphi (p)$.
That is to say, it is positive if the horizontal point is downward convex, and it is negative if the horizontal point is upward convex.
In the indefinite case, it determines the index of the critical point $p$ regardless of the sign of $(\frac{\partial g}{\partial t})_{\varphi (p)}$.

Consider the sign of $(\frac{\partial g}{\partial t})_{\varphi (p)}$ in the definite case.
The form $(s,t)=(u,x^2+y^2)$ says that the $t$-axis is directed to the side of $\varphi (S_\varphi \cap U)$ in which $\varphi (U)$ is contained.
The $g$-axis is directed in the same way if $(\frac{\partial g}{\partial t})_{\varphi (p)}$ is positive, and the $g$-axis is directed against if $(\frac{\partial g}{\partial t})_{\varphi (p)}$ is negative.
It determines the index in combination with whether the horizontal point is downward or upward convex.
\end{component}

\begin{component}[Cusp Case]
The first two eigenvalues are the solutions of $\alpha $ for the equation $\alpha \left( \alpha -(\frac{\partial ^2g}{\partial s^2})_{\varphi (p)}\right) =(\frac{\partial g}{\partial t})_{\varphi (p)}^2$.
They have mutually opposite signs, and so the index of the critical point $p$ is determined by the sign of the last eigenvalue $2(\frac{\partial g}{\partial t})_{\varphi (p)}$.
Recall that one half, $\{ (3x^2,x,0)\mid x<0\} $, of $S_\varphi \cap U$ consists of definite fold points and the other half, $\{ (3x^2,x,0)\mid x>0\} $, consists of indefinite fold points.
Immersed into the plane, the indefinite arc $\{ (s,t)=(3x^2,2x^3)\mid x>0\} $ is above the definite arc $\{ (s,t)=(3x^2,2x^3)\mid x<0\} $ with respect to the $t$-axis.
With respect to the $g$-axis, the indefinite arc is above the definite arc if $(\frac{\partial g}{\partial t})_{\varphi (p)}$ is positive, and the indefinite arc is below the definite arc if $(\frac{\partial g}{\partial t})_{\varphi (p)}$ is negative.
\end{component}
\end{proof}

\section{Isotoping the graphic}\label{Isotoping}

As well as deformations of two functions induce a deformation of the product map, a deformation of a product map induces deformations of the two functions.
In this section, we study the deformations of the two functions induced by an isotopy of the graphic in the plane ${\mathbb R}^2$.

Suppose $F,G:M\rightarrow {\mathbb R}$ are Morse functions such that $\varphi :=F\times G$ is stable.
Consider a smooth ambient isotopy $\{ H_r:{\mathbb R}^2\rightarrow {\mathbb R}^2\} _{r\in [0,1]}$ of ${\mathbb R}^2$ such that $H_0=id_{{\mathbb R}^2}$.
By the definitions, $\{ H_r\circ \varphi \} _{r\in [0,1]}$ is an isotopy of $\varphi $ and consists of stable maps.
It induces homotopies $\{ \pi _f\circ H_r\circ \varphi \} _{r\in [0,1]}$ of $F$ and $\{ \pi _g\circ H_r\circ \varphi \} _{r\in [0,1]}$ of $G$.
Here $\pi _f,\pi _g:{\mathbb R}^2\rightarrow {\mathbb R}$ are the orthogonal projections.
The isotoped graphic $H_r(\varphi (S_\varphi ))$ is the graphic of $\pi _f\circ H_r\circ \varphi $ and $\pi _g\circ H_r\circ \varphi $ for each $r\in [0,1]$.

\begin{Cor}\label{isotopy}
An ambient isotopy of ${\mathbb R}^2$ induces a quasi-isotopy of $G$ if and only if it keeps the graphic without horizontal inflection points.
The same holds for $F$ by replacing ``horizontal" with ``vertical".
\end{Cor}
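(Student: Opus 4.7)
The plan is to reduce this to Lemma \ref{degeneracy} applied at each time $r \in [0, 1]$. First I would observe that since $H_r$ is a diffeomorphism of $\mathbb{R}^2$, the map $\varphi_r := H_r \circ \varphi$ remains stable for every $r$ (the isotopy $\{\varphi_r\}$ of the stable map $\varphi$ stays inside the open set on which stability holds), its singular set coincides with $S_\varphi$, and its discriminant set is precisely the isotoped graphic $H_r(\varphi(S_\varphi))$. Writing $F_r := \pi_f \circ \varphi_r$ and $G_r := \pi_g \circ \varphi_r$, we have $\varphi_r = F_r \times G_r$ by construction, so the hypotheses of Lemma \ref{degeneracy} are satisfied for the pair $(F_r, G_r)$ at every time $r$.

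Next I would apply Lemma \ref{degeneracy} pointwise in $r$: a critical point $p$ of $G_r$ (equivalently, by Lemma \ref{critical}, a point of $S_{\varphi_r}$ whose image is horizontal) is degenerate if and only if $\varphi_r(p)$ is a horizontal inflection point of $H_r(\varphi(S_\varphi))$. Hence $G_r$ is a Morse function if and only if the isotoped graphic carries no horizontal inflection point at time $r$.

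Finally, by the definition recalled in Section \ref{Morse}, the homotopy $\{G_r\}_{r \in [0, 1]}$ is a quasi-isotopy precisely when $G_r$ is Morse for every $r \in [0, 1]$. Combining this with the previous step yields the statement for $G$; the case of $F$ follows by the symmetric argument with \emph{vertical} replacing \emph{horizontal}. No real obstacle arises here, since the content is the equivalence furnished by Lemma \ref{degeneracy}; the only point requiring care is that the stability hypothesis of that lemma is preserved throughout the isotopy, which is immediate from $H_r$ being a diffeomorphism of $\mathbb{R}^2$.
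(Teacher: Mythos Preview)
Your proposal is correct and follows essentially the same route as the paper's own proof, which simply cites Lemmas~\ref{critical}, \ref{degeneracy} and the definition of a quasi-isotopy. You have spelled out the details the paper leaves implicit (in particular, that stability is preserved because $H_r$ is a diffeomorphism, so Lemma~\ref{degeneracy} applies at each time $r$), but the argument is the same.
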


\begin{proof}
Lemmas \ref{critical}, \ref{degeneracy} and the definition of a quasi-isotopy lead to the corollary.
\end{proof}

We would like to develop our understanding of a horizontal inflection point.
We can think of it as the point at which either a birth or a death of a canceling pair of horizontal points occurs as illustrated in Figure \ref{fig_wave}.

\begin{figure}[ht]
\begin{minipage}{60pt}
\includegraphics[bb=0 0 60 60]{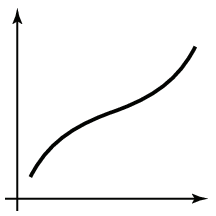}\\[-63pt]
\hspace*{-3pt}$g$\\[44pt]
\hspace*{50pt}$f$
\end{minipage}
\hspace{10pt}
{\LARGE $\leftrightarrow $}
\hspace{10pt}
\begin{minipage}{60pt}
\includegraphics[bb=0 0 60 60]{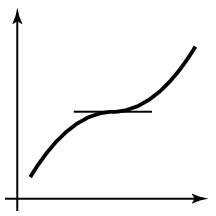}\\[-63pt]
\hspace*{-3pt}$g$\\[44pt]
\hspace*{50pt}$f$
\end{minipage}
\hspace{10pt}
{\LARGE $\leftrightarrow $}
\hspace{10pt}
\begin{minipage}{60pt}
\includegraphics[bb=0 0 60 60]{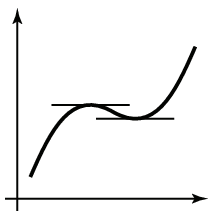}\\[-63pt]
\hspace*{-3pt}$g$\\[44pt]
\hspace*{50pt}$f$
\end{minipage}
\caption{A birth/death of a canceling pair of horizontal points.}
\label{fig_wave}
\end{figure}

\begin{Lem}\label{canceling}
A birth/death of a canceling pair of horizontal points of the graphic induces a birth/death of a canceling pair of critical points of $G$.
In particular, one on a definite arc induces one of indices $0$ and $1$ or indices $2$ and $3$, and one on an indefinite arc induces one of indices $1$ and $2$.
The same holds for $F$ by replacing ``horizontal" with ``vertical".
\end{Lem}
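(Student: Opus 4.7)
The plan is to combine Lemmas~\ref{critical}, \ref{degeneracy}, and \ref{index} with a local picture of the birth/death event illustrated in Figure~\ref{fig_wave}.

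First, I would identify the critical points. At the instant of the birth/death, one arc of the graphic exhibits a horizontal inflection point; by Lemma~\ref{degeneracy} this corresponds to a degenerate critical point $p_0$ of $G$. On one side of the event, no horizontal points sit on the arc, so by Lemma~\ref{critical} no critical points of $G$ lie in a small neighborhood of $p_0$. On the other side, the arc carries two horizontal points, one locally upward-convex and one locally downward-convex; again by Lemmas~\ref{critical} and~\ref{degeneracy} these are two non-degenerate critical points of $G$.

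Second, I would argue that the two critical points form a \emph{canceling} pair. The induced $1$-parameter family $\pi_g \circ H_r \circ \varphi$ passes through $p_0$ with a single degenerate critical point that splits into two non-degenerate critical points of consecutive index; this is exactly the standard Cerf birth/death model, since the local structure of the graphic in Figure~\ref{fig_wave} is smoothly equivalent to it. Thus on the $G$ side the event is a Morse birth/death.

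Third, I would read off the indices from Table~\ref{tab_index}. The two new horizontal points lie on the same component of $S_\varphi \setminus (\text{cusp points})$, so they share the marker ``d'' or ``i'' and, in the definite case, the same painted side; they differ only in the direction of convexity. The pairs of entries of Table~\ref{tab_index} that agree in marker (and painted side) but disagree in convexity are precisely the pairs of indices $\{0,1\}$ and $\{2,3\}$ for a definite arc, and the pair $\{1,2\}$ for an indefinite arc. This can be double-checked directly from the Hessians computed in the proof of Lemma~\ref{degeneracy}: on a definite arc the two ``transverse'' eigenvalues share a sign, so flipping the sign of the third eigenvalue $\partial^2 g/\partial s^2$ (which is what happens between the two horizontal points of opposite convexity) shifts the index by one within $\{0,1\}$ or $\{2,3\}$; on an indefinite arc those two transverse eigenvalues have opposite signs, so the same flip shifts the index within $\{1,2\}$.

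The main hurdle is the bookkeeping in the third step, namely matching the upward/downward convex pair in Figure~\ref{fig_wave} against the entries of Table~\ref{tab_index} consistent with a single arc (and a single painted side). Once that matching is made, the lemma for $G$ follows; the symmetric statement for $F$ is obtained by exchanging the roles of the two coordinates.
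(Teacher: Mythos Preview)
Your first and third steps match the paper's argument exactly: Lemmas~\ref{critical} and~\ref{index} give the existence of the pair and the claimed indices, and your reading of Table~\ref{tab_index} (together with the Hessian check) is precisely how the paper establishes the index statement.

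The gap is in your second step. You assert that the induced family $\pi_g\circ H_r\circ\varphi$ ``is exactly the standard Cerf birth/death model'' because the graphic looks like Figure~\ref{fig_wave}. But Lemma~\ref{degeneracy} only tells you that the Hessian of $G$ at $p_0$ has corank~$1$; it does not tell you that $p_0$ is an $A_2$ singularity, nor that the family unfolds it versally. Concretely, in the fold chart one has $\partial^3 G/\partial u^3=\partial^3 g/\partial s^3$, and nothing so far forces this to be nonzero at the horizontal inflection: if the graphic happened to have a higher-order tangency (say locally $g=f^4$) the degenerate critical point would not be of birth type. Likewise, nothing you have said rules out the isotopy being tangent to the bifurcation set in $r$.

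The paper closes this gap not by a direct local computation but by an approximation argument in function space: first perturb the isotopy $\{H_r\}$ so that the third derivative of the graphic at the inflection and the $r$-derivative of the slope are both nonzero, which carves out an open neighbourhood $U\subset C^\infty(M\times[0,1],\mathbb{R}^2)$ of $\{H_r\circ\varphi\}$ in which every homotopy still exhibits exactly one horizontal-inflection event; then project to $C^\infty(M\times[0,1],\mathbb{R})$ and use density of Cerf-generic paths to find $\{\xi_r\}$ in the image of $U$. Such a $\{\xi_r\}$ is Morse except at finitely many instants where canceling pairs are born or die, and by construction there is exactly one such instant; hence the pair is canceling, and by proximity so is the original one. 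Your direct route can be made to work, but only after you explicitly impose (and justify) the two genericity conditions above and then verify the $A_2$ normal form; as written, the key sentence in your step~2 is an assertion rather than an argument.
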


\begin{proof}
By Lemma \ref{critical}, a birth/death of a canceling pair of horizontal points induces a birth/death of a pair of critical points of $G$.
Moreover, by Lemma \ref{index}, the indices of the critical points are as stated.
It remains to prove that the pair of critical points is a canceling pair.

Suppose $\{ H_r\} _{r\in [0,1]}$ is an ambient isotopy of ${\mathbb R}^2$ which keeps the graphic without horizontal or vertical inflection points except for a single birth of a canceling pair of horizontal points.
The isotopy $\{ H_r\circ \varphi \} _{r\in [0,1]}$ of $\varphi $ can be regarded as a point in the mapping space $C^\infty (M\times [0,1],{\mathbb R}^2)$.
By a small deformation of $\{ H_r\} _{r\in [0,1]}$ preserving $H_0$ and $H_1$, we can assume that the third derivative of the graphic at the horizontal inflection point is not zero, and that the derivative with respect to $r$ of the slope of the graphic at the inflection point is not zero when the inflection point is horizontal.
They ensure the existence of an open neighborhood $U$ of $\{ H_r\circ \varphi \} _{r\in [0,1]}$ such that every $\{ \psi _r\} _{r\in [0,1]}$ in $U$ keeps the discriminant sets $\{ \psi _r(S_{\psi _r})\} _{r\in [0,1]}$ without horizontal or vertical inflection points except for a single birth of a canceling pair of horizontal points.

The homotopy $\{ \pi _g\circ H_r\circ \varphi \} _{r\in [0,1]}$ of $G$ can be regarded as a point in $C^\infty (M\times [0,1],{\mathbb R})$.
The map $\Phi :C^\infty (M\times [0,1],{\mathbb R}^2)\rightarrow C^\infty (M\times [0,1],{\mathbb R})\times C^\infty (M\times [0,1],{\mathbb R})$, $\{ \psi _r\} _{r\in [0,1]}\mapsto (\{ \pi _f\circ \psi _r\} _{r\in [0,1]},\{ \pi _g\circ \psi _r\} _{r\in [0,1]})$ is homeomorphic by \cite[Chapter I\hspace{-.1em}I, Proposition 3.6]{golubitsky-guillemin}.
The subset $(\Pi _g\circ \Phi )(U)\subset C^\infty (M\times [0,1],{\mathbb R})$ is therefore an open neighborhood of $\{ \pi _g\circ H_r\circ \varphi \} _{r\in [0,1]}$, where $\Pi _g$ is the projection to the second factor.

It is well known that any homotopy of a smooth function can be approximated by a homotopy $\{ \xi _r\} _{r\in [0,1]}$ such that $\xi _r$ is Morse for all but finitely many $r\in [0,1]$ and either a birth or a death of a canceling pair of critical points occurs at each of the finitely many $r$.
So there exists such a homotopy $\{ \xi _r\} _{r\in [0,1]}$ in $(\Pi _g\circ \Phi )(U)$.
By the definition of $U$ and again Lemma \ref{critical}, $\{ \xi _r\} _{r\in [0,1]}$ is a quasi-isotopy except for a single birth of a canceling pair of critical points.
It implies that the pair of critical points is originally a canceling pair.
\end{proof}

We conclude this section with some remarks about how cusps of the graphic can be isotoped.
Lemma \ref{type} implies a bit anti-intuitive fact that no smooth ambient isotopy of the plane can curl a cusp of the graphic to be type two.
In other words, no smooth ambient isotopy can take an inflection point to a cusp because second derivatives of the graphic diverge at cusps.
The number of horizontal points is preserved near a horizontal cusp as illustrated in Figure \ref{fig_swing}, and the critical point of $G$ remains non-degenerate by Lemma \ref{degeneracy}.
In fact, the cusp case in Lemma \ref{index} can also be understood by this move.

\begin{figure}[ht]
\begin{minipage}{60pt}
\includegraphics[bb=0 0 60 60]{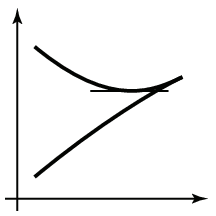}\\[-63pt]
\hspace*{-3pt}$g$\\[44pt]
\hspace*{50pt}$f$
\end{minipage}
\hspace{10pt}
{\LARGE $\leftrightarrow $}
\hspace{10pt}
\begin{minipage}{60pt}
\includegraphics[bb=0 0 60 60]{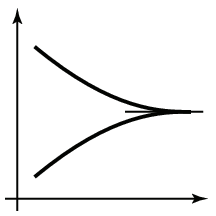}\\[-63pt]
\hspace*{-3pt}$g$\\[44pt]
\hspace*{50pt}$f$
\end{minipage}
\hspace{10pt}
{\LARGE $\leftrightarrow $}
\hspace{10pt}
\begin{minipage}{60pt}
\includegraphics[bb=0 0 60 60]{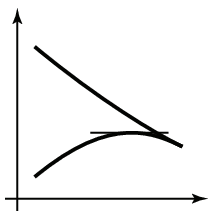}\\[-63pt]
\hspace*{-3pt}$g$\\[44pt]
\hspace*{50pt}$f$
\end{minipage}
\caption{A typical move involving a horizontal cusp.}
\label{fig_swing}
\end{figure}

\section{Proof of the theorems}\label{Proof}

By Johnson's theorem and Lemma \ref{type}, the Reidemeister--Singer distance between two Heegaard splittings is at most the number of indefinite negative slope inflection points of the graphic.
We would like to deform the graphic to reduce this number with control on the deformations of the Morse functions.
However, it seems a complicated problem what kind of deformation effectively reduces the number of inflection points themselves.
We therefore consider a deformation of the graphic which makes the slopes of inflection points positive.

Suppose $(\Sigma ,V^-,V^+),(T,W^-,W^+)$ are Heegaard splittings for a closed orientable connected smooth $3$-manifold $M$.
Let $F,G:M\rightarrow {\mathbb R}$ be Morse functions representing $(\Sigma ,V^-,V^+),(T,W^-,W^+)$, respectively.
We can choose each of them so that it has unique critical points of indices zero and three, and we can assume that $F\times G$ is stable by Lemma \ref{stable}.

We deform the graphic of $F,G$ by an ambient isotopy $\{ H_r\} _{r\in [g_-,g_+]}$ of ${\mathbb R}^2$ defined as follows:
Let $g_-$ be a value below the minimum value of $G$, and let $g_+$ be a value above the maximum value of $G$.
Choose $\delta $ to be a sufficiently small positive constant and $\Delta $ to be a sufficiently large constant.
Let $\{ h_r:{\mathbb R}\rightarrow {\mathbb R}\} _{r\in [g_-,g_+]}$ be a smooth family of monotonously increasing smooth functions such that $h_r(g)=\Delta (g-r)$ if $g\leq r-\delta $ and $h_r(g)=0$ if $g\geq r+\delta $.
We define the isotopy $\{ H_r\} _{r\in [g_-,g_+]}$ by $H_r(f,g):=(f+h_r(g)-h_{g_-}(g),g)$.
Let $\Gamma $ denote the graphic of $F,G$ and $\Gamma _r=H_r(\Gamma )$.

\begin{figure}[ht]
$\cdots $ {\LARGE $\rightarrow $}
\hspace{10pt}
\begin{minipage}{100pt}
\includegraphics[bb=0 0 100 120]{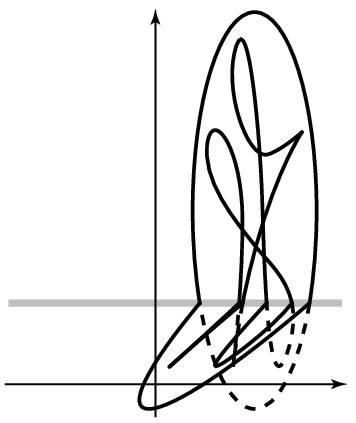}\\[-120pt]
\hspace*{37pt}$g$\\[59pt]
\hspace*{38pt}$r$\\[23pt]
\hspace*{88pt}$f$
\end{minipage}
\hspace{10pt}
{\LARGE $\rightarrow $}
\hspace{10pt}
\begin{minipage}{100pt}
\includegraphics[bb=0 0 100 120]{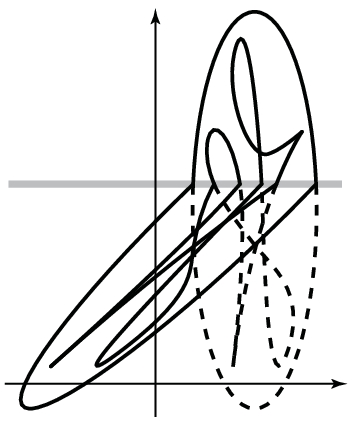}\\[-120pt]
\hspace*{37pt}$g$\\[25pt]
\hspace*{38pt}$r$\\[57pt]
\hspace*{88pt}$f$
\end{minipage}
\hspace{10pt}
{\LARGE $\rightarrow $} $\cdots $
\caption{The outline of the deformation of the graphic $\Gamma _r$ as $r$.
The original graphic $\Gamma =\Gamma _{g_-}$ is shown with broken lines.}
\label{fig_shear}
\end{figure}

The isotopy $\{ H_r\} _{r\in [g_-,g_+]}$ shears the graphic by the process as in Figure \ref{fig_shear}.
Note that the graphic $\Gamma $ of $F,G$ is contained in the image $(F\times G)(M)$ and so in the region $\{ (f,g)\in {\mathbb R}^2\mid g_-<g<g_+\} $.
The thin band $b_r:=\{ r-\delta \leq g\leq r+\delta \} $ looks like a scanning line which runs from below to above.
In the front region $\{ g>r+\delta \} $, the graphic $\Gamma _r$ remains unchanged from $\Gamma $.
In the back region $\{ g<r-\delta \} $, the graphic $\Gamma _r$ is the result of shearing $\Gamma $ and is translated leftward.
Since the shearing slope $\Delta $ is sufficiently large, $\Gamma _r\cap \{ g<r-\delta \} $ has positive slope outside of small neighborhoods of horizontal points.
In particular, every inflection point of it has positive slope.

This isotopy induces a homotopy $\{ F_r:=\pi _f\circ H_r\circ (F\times G)\} _{r\in [g_-,g_+]}$ of $F$.
When $F_r$ is Morse, it represents a Heegaard splitting $(\Sigma _r,V_r^-,V_r^+)$.
On the other hand, $\pi _g\circ H_r\circ (F\times G)$ is constantly $G$ because $H_r$ preserves the second coordinate $g$.
In particular, the represented Heegaard splitting $(T,W^-,W^+)$ is preserved.
Since every inflection point of the result graphic $\Gamma _{g_+}$ has positive slope, the function $F_{g_+}$ is Morse and the Heegaard splitting $(\Sigma _{g_+},V_{g_+}^-,V_{g_+}^+)$ is isotopic to $(T,W^-,W^+)$ by Theorem \ref{johnson} and Lemma \ref{type}.
To bound the Reidemeister--Singer distance between $(\Sigma ,V^-,V^+)=(\Sigma _{g_-},V_{g_-}^-,V_{g_-}^+)$ and $(T,W^-,W^+)$, we analyze how $(\Sigma _r,V_r^-,V_r^+)$ changes as $r$.

We observe the deformation of the graphic $\Gamma _r$, paying attention to births and deaths of vertical points.
No births or deaths happen in the front and back region, but in the scanning line $b_r$.
Note that we can make the deformation in $b_r$ arbitrarily sharp by choosing $\delta $ small and $\Delta $ large.
Note also that $\Gamma $ has only finitely many horizontal points, vertical points and cusps.
We can therefore assume that the figures below do not lose generality.

\begin{figure}[ht]
\parbox[c]{80pt}{\includegraphics[bb=0 0 80 80]{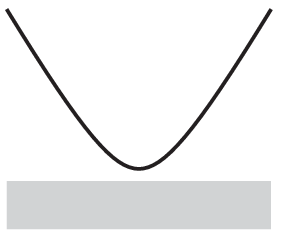}}
\hspace{10pt}
{\LARGE $\rightarrow $}
\hspace{10pt}
\parbox[c]{80pt}{\includegraphics[bb=0 0 80 80]{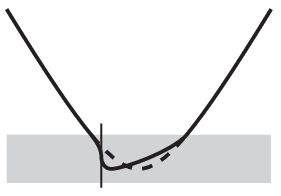}}
\hspace{10pt}
{\LARGE $\rightarrow $}
\hspace{10pt}
\parbox[c]{80pt}{\includegraphics[bb=0 0 80 80]{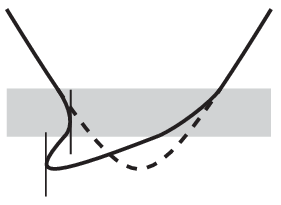}}
\caption{The deformation of $\Gamma _r$ when $b_r$ passes a downward convex horizontal point of $\Gamma $.
The band $b_r$ is shown in gray.
It looks pretty thick because the picture has been greatly enlarged.}
\label{fig_birth_h}
\end{figure}

When $b_r$ passes a downward convex horizontal point of $\Gamma $, a birth of a canceling pair of vertical points occurs as in Figure \ref{fig_birth_h}.
By Lemmas \ref{stabilization} and \ref{canceling}, it preserves $(\Sigma _r,V_r^-,V_r^+)$ if the horizontal point is definite, and it causes a stabilization for $(\Sigma _r,V_r^-,V_r^+)$ if the horizontal point is indefinite.

\begin{figure}[ht]
\parbox[c]{80pt}{\includegraphics[bb=0 0 80 80]{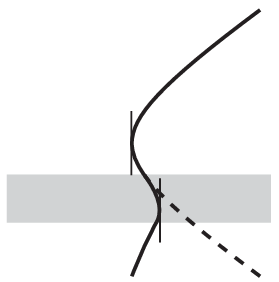}}
\hspace{10pt}
{\LARGE $\rightarrow $}
\hspace{10pt}
\parbox[c]{80pt}{\includegraphics[bb=0 0 80 80]{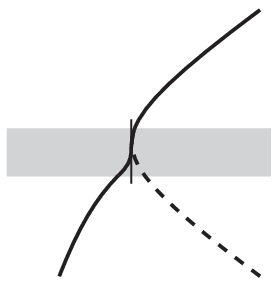}}
\hspace{10pt}
{\LARGE $\rightarrow $}
\hspace{10pt}
\parbox[c]{80pt}{\includegraphics[bb=0 0 80 80]{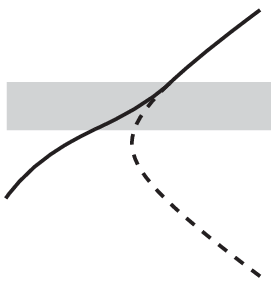}}
\caption{At a leftward convex vertical point of $\Gamma $.}
\label{fig_death_v}
\end{figure}

When $b_r$ passes a leftward convex vertical point of the original graphic $\Gamma $, a death of a canceling pair of vertical points occurs as in Figure \ref{fig_death_v}.
It preserves $(\Sigma _r,V_r^-,V_r^+)$ if the vertical point is definite, and it causes a destabilization for $(\Sigma _r,V_r^-,V_r^+)$ if indefinite.

\begin{figure}[ht]
\parbox[c]{80pt}{\includegraphics[bb=0 0 80 80]{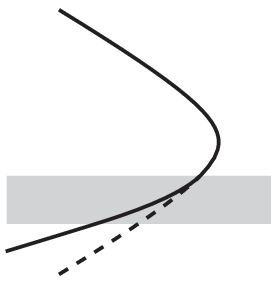}}
\hspace{10pt}
{\LARGE $\rightarrow $}
\hspace{10pt}
\parbox[c]{80pt}{\includegraphics[bb=0 0 80 80]{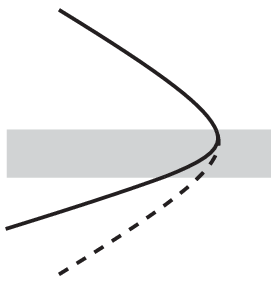}}
\hspace{10pt}
{\LARGE $\rightarrow $}
\hspace{10pt}
\parbox[c]{80pt}{\includegraphics[bb=0 0 80 80]{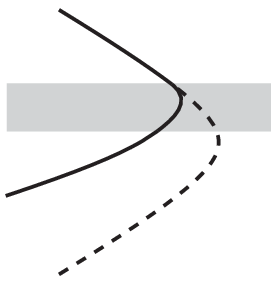}}
\caption{At a rightward convex vertical point of $\Gamma $.}
\label{fig_through_v}
\end{figure}
\begin{figure}[ht]
\parbox[c]{80pt}{\includegraphics[bb=0 0 80 80]{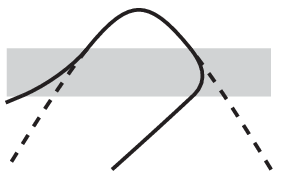}}
\hspace{10pt}
{\LARGE $\rightarrow $}
\hspace{10pt}
\parbox[c]{80pt}{\includegraphics[bb=0 0 80 80]{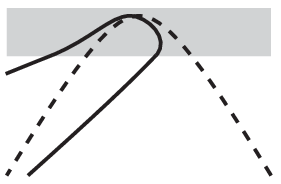}}
\hspace{10pt}
{\LARGE $\rightarrow $}
\hspace{10pt}
\parbox[c]{80pt}{\includegraphics[bb=0 0 80 80]{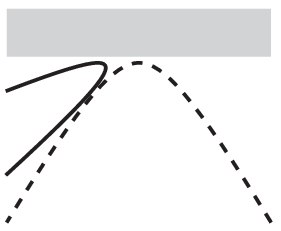}}
\caption{At an upward convex horizontal point of $\Gamma $.}
\label{fig_through_h}
\end{figure}

As in Figures \ref{fig_through_v} and \ref{fig_through_h}, no vertical inflection points appear at a rightward convex vertical point and an upward convex horizontal point of $\Gamma $, and so $(\Sigma _r,V_r^-,V_r^+)$ is preserved.

\begin{figure}[ht]
\parbox[c]{80pt}{\includegraphics[bb=0 0 80 80]{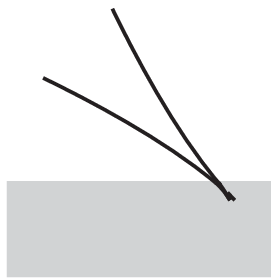}}
\hspace{10pt}
{\LARGE $\rightarrow $}
\hspace{10pt}
\parbox[c]{80pt}{\includegraphics[bb=0 0 80 80]{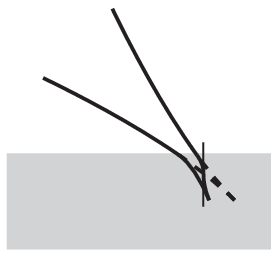}}\\[8pt]
\hspace{10pt}
{\LARGE $\rightarrow $}
\hspace{10pt}
\parbox[c]{80pt}{\includegraphics[bb=0 0 80 80]{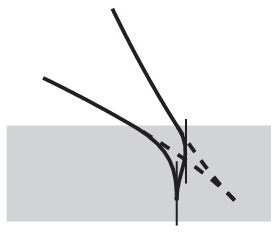}}
\hspace{10pt}
{\LARGE $\rightarrow $}
\hspace{10pt}
\parbox[c]{80pt}{\includegraphics[bb=0 0 80 80]{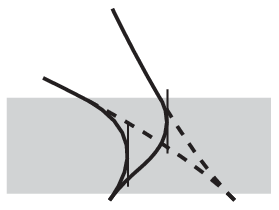}}
\caption{At a downer right pointing cusp of $\Gamma $.}
\label{fig_birth_c}
\end{figure}

When $b_r$ passes a downer right pointing cusp of $\Gamma $, a birth of a canceling pair of vertical points occurs as in Figure \ref{fig_birth_c}.
Note that the cusp remains to be type one as remarked in Section \ref{Isotoping}, and a vertical inflection point appears on the right arc.
It preserves $(\Sigma _r,V_r^-,V_r^+)$ if the right arc is definite, and it causes a stabilization for $(\Sigma _r,V_r^-,V_r^+)$ if the right arc is indefinite.

\begin{figure}[ht]
\parbox[c]{80pt}{\includegraphics[bb=0 0 80 80]{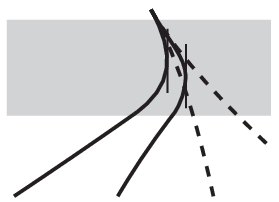}}
\hspace{10pt}
{\LARGE $\rightarrow $}
\hspace{10pt}
\parbox[c]{80pt}{\includegraphics[bb=0 0 80 80]{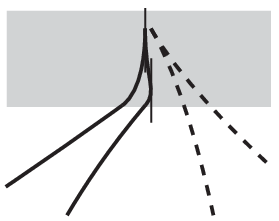}}\\[8pt]
\hspace{10pt}
{\LARGE $\rightarrow $}
\hspace{10pt}
\parbox[c]{80pt}{\includegraphics[bb=0 0 80 80]{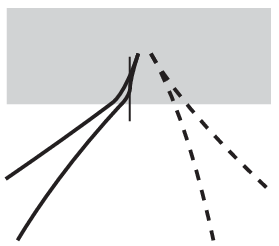}}
\hspace{10pt}
{\LARGE $\rightarrow $}
\hspace{10pt}
\parbox[c]{80pt}{\includegraphics[bb=0 0 80 80]{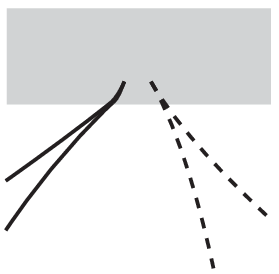}}
\caption{At an upper left pointing cusp of $\Gamma $.}
\label{fig_death_c}
\end{figure}

When $b_r$ passes an upper left pointing cusp of $\Gamma $, a canceling pair of vertical points deaths as in Figure \ref{fig_death_c}.
It preserves $(\Sigma _r,V_r^-,V_r^+)$ if the right arc is definite, and it causes a destabilization for $(\Sigma _r,V_r^-,V_r^+)$ if the right arc is indefinite.

Consider what happens at a downer left pointing cusp and an upper right pointing cusp of $\Gamma $.
The reader can check that they keep the graphic without vertical inflection points, and so preserve $(\Sigma _r,V_r^-,V_r^+)$.

Note that the graphic $\Gamma $ possibly has vertical or horizontal cusps.
The reader can check that $(\Sigma _r,V_r^-,V_r^+)$ has a stabilization at a right pointing horizontal cusp where the upper arc is indefinite, a destabilization at an upper pointing vertical cusp where the right arc is indefinite, and nothing at the other types of vertical or horizontal cusps.
In fact, those can also be understood by the move in Figure \ref{fig_swing}.

Note that in the remaining parts of $\Gamma $ including crossing points and inflection points, no vertical inflection points appear, and so $(\Sigma _r,V_r^-,V_r^+)$ is preserved.
Thus, $(\Sigma _{g_+},V_{g_+}^-,V_{g_+}^+)$ is obtained from $(\Sigma ,V^-,V^+)$ by
\begin{align*}
&\sharp \left\{ 
\parbox[c]{30pt}{\includegraphics[bb=0 0 35 35,width=30pt]{figures/horizontal.eps}\hspace{-5pt}\raisebox{9pt}{i}},
\parbox[c]{30pt}{\includegraphics[bb=0 0 35 35,width=30pt]{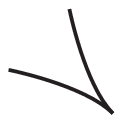}\hspace{-20pt}\raisebox{1pt}{d}\hspace{7pt}\raisebox{15pt}{i}},
\parbox[c]{30pt}{\includegraphics[bb=0 0 35 35,width=30pt]{figures/cusp_1.eps}\hspace{-15pt}\raisebox{20pt}{i}\hspace{-4pt}\raisebox{4pt}{d}}
\text{ on }\Gamma \right\} \text{ times stabilizations and}\\
&\sharp \left\{ 
\parbox[c]{30pt}{\includegraphics[bb=0 0 35 35,height=30pt,origin=c,angle=-90]{figures/horizontal.eps}\hspace{-17pt}\raisebox{9pt}{i}},
\parbox[c]{30pt}{\includegraphics[bb=0 0 35 35,width=30pt,origin=c,angle=180]{figures/cusp_2.eps}\hspace{-27pt}\raisebox{9pt}{d}\hspace{7pt}\raisebox{23pt}{i}},
\parbox[c]{30pt}{\includegraphics[bb=0 0 35 35,height=30pt,origin=c,angle=90]{figures/cusp_1.eps}\hspace{-25pt}\raisebox{16pt}{d}\hspace{10pt}\raisebox{16pt}{i}}
\text{ on }\Gamma \right\} \text{ times destabilizations.}
\end{align*}

By Lemma \ref{index} and the assumption that each of $F,G$ has unique critical points of indices zero and three,
\begin{align*}
&\sharp \left\{ 
\parbox[c]{30pt}{\includegraphics[bb=0 0 35 35,height=30pt,origin=c,angle=-90]{figures/horizontal.eps}\hspace{-17pt}\raisebox{9pt}{i}},
\parbox[c]{30pt}{\includegraphics[bb=0 0 35 35,height=30pt,origin=c,angle=90]{figures/cusp_1.eps}\hspace{-25pt}\raisebox{16pt}{d}\hspace{10pt}\raisebox{16pt}{i}}
\text{ on }\Gamma \right\} \leq \sharp \{ \text{index }1\text{ critical point of }F\} ={\rm g}(\Sigma ),\\
&\sharp \left\{ 
\parbox[c]{30pt}{\includegraphics[bb=0 0 35 35,width=30pt]{figures/horizontal.eps}\hspace{-5pt}\raisebox{9pt}{i}},
\parbox[c]{30pt}{\includegraphics[bb=0 0 35 35,width=30pt]{figures/cusp_1.eps}\hspace{-15pt}\raisebox{20pt}{i}\hspace{-4pt}\raisebox{4pt}{d}}
\text{ on }\Gamma \right\}\leq \sharp \{ \text{index }1\text{ critical point of }G\} ={\rm g}(T).
\end{align*}
The Reidemeister--Singer distance $d_+$ between $(\Sigma ,V^-,V^+),(T,W^-,W^+)$ therefore satisfies
\begin{align}
d_+&\leq \sharp \left\{ 
\parbox[c]{30pt}{\includegraphics[bb=0 0 35 35,width=30pt]{figures/horizontal.eps}\hspace{-5pt}\raisebox{9pt}{i}},
\parbox[c]{30pt}{\includegraphics[bb=0 0 35 35,width=30pt]{figures/cusp_2.eps}\hspace{-20pt}\raisebox{1pt}{d}\hspace{7pt}\raisebox{15pt}{i}},
\parbox[c]{30pt}{\includegraphics[bb=0 0 35 35,width=30pt]{figures/cusp_1.eps}\hspace{-15pt}\raisebox{20pt}{i}\hspace{-4pt}\raisebox{4pt}{d}},
\parbox[c]{30pt}{\includegraphics[bb=0 0 35 35,height=30pt,origin=c,angle=-90]{figures/horizontal.eps}\hspace{-17pt}\raisebox{9pt}{i}},
\parbox[c]{30pt}{\includegraphics[bb=0 0 35 35,width=30pt,origin=c,angle=180]{figures/cusp_2.eps}\hspace{-27pt}\raisebox{9pt}{d}\hspace{7pt}\raisebox{23pt}{i}},
\parbox[c]{30pt}{\includegraphics[bb=0 0 35 35,height=30pt,origin=c,angle=90]{figures/cusp_1.eps}\hspace{-25pt}\raisebox{16pt}{d}\hspace{10pt}\raisebox{16pt}{i}}
\text{ on }\Gamma \right\} \notag \\
&\leq {\rm g}(\Sigma )+{\rm g}(T)+\sharp \left\{ 
\parbox[c]{30pt}{\includegraphics[bb=0 0 35 35,width=30pt]{figures/cusp_2.eps}\hspace{-20pt}\raisebox{1pt}{d}\hspace{7pt}\raisebox{15pt}{i}},
\parbox[c]{30pt}{\includegraphics[bb=0 0 35 35,width=30pt,origin=c,angle=180]{figures/cusp_2.eps}\hspace{-27pt}\raisebox{9pt}{d}\hspace{7pt}\raisebox{23pt}{i}}
\text{ on }\Gamma \right\} .\label{eq1}
\end{align}

Consider another ambient isotopy $\{ H'_r\} _{r\in [-g_+,-g_-]}$ defined by $H'_r(f,g):=(f-h_r(-g)+h_{-g_+}(-g),g)$.
It shears the graphic positively as well as $\{ H_r\} _{r\in [g_-,g_+]}$, but the scanning line runs from above to below.
By similar observations,
\begin{equation}
d_+\leq {\rm g}(\Sigma )+{\rm g}(T)+\sharp \left\{ 
\parbox[c]{30pt}{\includegraphics[bb=0 0 35 35,width=30pt]{figures/cusp_2.eps}\hspace{-18pt}\raisebox{1pt}{i}\hspace{7pt}\raisebox{15pt}{d}},
\parbox[c]{30pt}{\includegraphics[bb=0 0 35 35,width=30pt,origin=c,angle=180]{figures/cusp_2.eps}\hspace{-25pt}\raisebox{9pt}{i}\hspace{7pt}\raisebox{23pt}{d}}
\text{ on }\Gamma \right\} .\label{eq2}
\end{equation}
By $((\ref{eq1})+(\ref{eq2}))/2$,
\begin{align}
d_+&\leq {\rm g}(\Sigma )+{\rm g}(T)+\sharp \{ \text{negative slope cusp of }\Gamma \} /2\label{eq3}\\
&\leq {\rm g}(\Sigma )+{\rm g}(T)+c(F\times G)/2\notag 
\end{align}
to conclude the proof of Theorem \ref{main-1}.

Consider similar ambient isotopies of ${\mathbb R}^2$ shearing the graphic negatively.
The sheared graphic has no positive slope inflection points.
Then consider the reflection of ${\mathbb R}^2$ in the $f$-axis, which makes the graphic without negative slope inflection points.
It corresponds to replacing the Morse function $G$ to $-G$ and flipping the Heegaard splitting $(T,W^-,W^+)$ to be $(T,W^+,W^-)$.
By similar arguments, the Reidemeister--Singer distance $d_-$ between $(\Sigma ,V^-,V^+),(T,W^+,W^-)$ satisfies
\begin{equation}
d_-\leq {\rm g}(\Sigma )+{\rm g}(T)+\sharp \{ \text{positive slope cusp of }\Gamma \} /2.\label{eq4}
\end{equation}
By $((\ref{eq3})+(\ref{eq4}))/2$, the Reidemeister--Singer distance between $\Sigma ,T$ is
\begin{equation*}
\min \{ d_+,d_-\} \leq (d_++d_-)/2\leq {\rm g}(\Sigma )+{\rm g}(T)+c(F\times G)/4
\end{equation*}
to conclude the proof of Theorem \ref{main-2}.

\bibliographystyle{amsplain}

\begin{thebibliography}{99}
\bibitem{bachman}D. Bachman, {\it Heegaard splittings of sufficiently complicated 3-manifolds I: Stabilization}, arXiv:0903.1695.
\bibitem{cerf}J. Cerf, Sur les diffeomorphismes de la sph\`ere de dimension trois $(\Gamma _4=0)$, Lecture Notes in Mathematics {\bf 53}, Springer-Verlag, Berlin-New York, 1968.
\bibitem{golubitsky-guillemin}M. Golubitsky and V. Guillemin, Stable mappings and their singularities, Graduate Texts in Mathematics, Vol. {\bf 14}, Springer-Verlag, New York-Heidelberg, 1973.
\bibitem{hass-thompson-thurston}J. Hass, A. Thompson and W. Thurston, {\it Stabilization of Heegaard splittings}, Geom. Topol. {\bf 13} (2009), no. 4, 2029-2050.
\bibitem{hirsch}M. W. Hirsch, Differential topology, Graduate Texts in Mathematics {\bf 33}, Springer-Verlag, 1976.
\bibitem{johnson1}J. Johnson, {\it Stable functions and common stabilizations of Heegaard splittings}, Trans. Amer. Math. Soc. {\bf 361} (2009), no. 7, 3747--3765.
\bibitem{johnson2}J. Johnson, {\it Bounding the stable genera of Heegaard splittings from below}, J. Topol. {\bf 3} (2010), no. 3, 668--690.
\bibitem{johnson3}J. Johnson, {\it An upper bound on common stabilizations of Heegaard splittings}, arXiv:1107.2127.
\bibitem{kobayashi-saeki}T. Kobayashi and O. Saeki, {\it The Rubinstein--Scharlemann graphic of a 3-manifold as the discriminant set of a stable map}, Pacific J. Math. {\bf 195} (2000), no. 1, 101-156.
\bibitem{levine1}H. Levine, {\it Elimination of cusps}, Topology {\bf 3} (1965), 263--296.
\bibitem{mather5}J. Mather, {\it Stability of $C^{\infty }$ mappings. V. Transversality}, Advances in Math. {\bf 4} (1970), 301-336.
\bibitem{milnor}J. Milnor, Morse theory, Annals of Mathematics Studies, No. {\bf 51}, Princeton University Press, 1963.
\bibitem{reidemeister}K. Reidemeister, {\it Zur dreidimensionalen Topologie}, Abh. Math. Sem. Univ. Hamburg {\bf 11} (1933), 189-194.
\bibitem{rubinstein-scharlemann1}H. Rubinstein and M. Scharlemann, {\it Comparing Heegaard splittings of non-Haken 3-manifolds}, Topology {\bf 35} (1996), no. 4, 1005-1026.
\bibitem{schultens}J. Schultens, {\it The classification of Heegaard splittings for (compact orientable surface) $\times S^1$}, Proc. London Math. Soc. {\bf 67} (1993), no. 2, 425--448.
\bibitem{singer}J. Singer, {\it Three-dimensional manifolds and their Heegaard diagrams}, Trans. Amer. Math. Soc. {\bf 35} (1933), no. 1, 88-111.
\bibitem{takao}K. Takao, {\it A refinement of Johnson's bounding for the stable genera of Heegaard splittings}, Osaka J. Math. {\bf 48} (2011), no. 1, 251-268.
\end{thebibliography}

\end{document}